%% LyX 2.1.2 created this file.  For more info, see http://www.lyx.org/.
%% Do not edit unless you really know what you are doing.
\documentclass[oneside,english]{amsart}
\usepackage{lmodern}

\usepackage[T1]{fontenc}
\usepackage[latin9]{inputenc}
\usepackage{enumitem}
\usepackage{amsthm}
\usepackage{amssymb}
\usepackage{graphicx}
\PassOptionsToPackage{normalem}{ulem}
\usepackage{ulem}

\makeatletter
%%%%%%%%%%%%%%%%%%%%%%%%%%%%%% Textclass specific LaTeX commands.
\numberwithin{equation}{section}
\numberwithin{figure}{section}
  \theoremstyle{remark}
  \newtheorem*{acknowledgement*}{\protect\acknowledgementname}
\theoremstyle{plain}
\newtheorem{thm}{\protect\theoremname}[section]
  \theoremstyle{definition}
  \newtheorem{defn}[thm]{\protect\definitionname}
  \theoremstyle{plain}
  \newtheorem{assumption}[thm]{\protect\assumptionname}
  \theoremstyle{plain}
  \newtheorem{lem}[thm]{\protect\lemmaname}
  \theoremstyle{plain}
  \newtheorem{fact}[thm]{\protect\factname}
  \theoremstyle{remark}
  \newtheorem{rem}[thm]{\protect\remarkname}
  \theoremstyle{plain}
  \newtheorem{cor}[thm]{\protect\corollaryname}
  \theoremstyle{plain}
  \newtheorem{prop}[thm]{\protect\propositionname}
  \theoremstyle{definition}
  \newtheorem{example}[thm]{\protect\examplename}
 \newlist{casenv}{enumerate}{4}
 \setlist[casenv]{leftmargin=*,align=left,widest={iiii}}
 \setlist[casenv,1]{label={{\itshape\ \casename} \arabic*.},ref=\arabic*}
 \setlist[casenv,2]{label={{\itshape\ \casename} \roman*.},ref=\roman*}
 \setlist[casenv,3]{label={{\itshape\ \casename\ \alph*.}},ref=\alph*}
 \setlist[casenv,4]{label={{\itshape\ \casename} \arabic*.},ref=\arabic*}
  \theoremstyle{plain}
  \newtheorem{question}[thm]{\protect\questionname}

%%%%%%%%%%%%%%%%%%%%%%%%%%%%%% User specified LaTeX commands.
\usepackage{amsmath}

\pagestyle{headings}
\usepackage{a4wide}
\usepackage{url}
\linespread{1.5}

\makeatother

\usepackage{babel}
  \providecommand{\acknowledgementname}{Acknowledgement}
  \providecommand{\assumptionname}{Assumption}
  \providecommand{\corollaryname}{Corollary}
  \providecommand{\definitionname}{Definition}
  \providecommand{\examplename}{Example}
  \providecommand{\factname}{Fact}
  \providecommand{\lemmaname}{Lemma}
  \providecommand{\propositionname}{Proposition}
  \providecommand{\questionname}{Question}
  \providecommand{\remarkname}{Remark}
 \providecommand{\casename}{Case}
\providecommand{\theoremname}{Theorem}

\begin{document}
\global\long\def\Aut{\operatorname{Aut}}
\global\long\def\calB{\mathcal{B}}
\global\long\def\calI{\mathcal{I}}
\global\long\def\ball#1#2{\calB(#1, #2)}
\global\long\def\calU{\mathcal{U}}
\global\long\def\calV{\mathcal{V}}
\global\long\def\Cantorspace{\functions{\omega}{2}}
\global\long\def\closure#1{\overline{#1}}
\global\long\def\concatenation{\mathrel{\smallfrown}}
\global\long\def\constantsequence#1#2{\left(#1\right){}^{#2}}
\global\long\def\diameter#1{\mathrm{diam}\left(#1\right)}
\global\long\def\Ezero{\mathbb{E}_{0}}
\global\long\def\from{\colon}
\global\long\def\Fsigma{F_{\sigma}}
\global\long\def\functions#1#2{#2^{#1}}
\global\long\def\heightcorrection#1{\raisebox{0pt}[0pt][0pt]{#1}}
\global\long\def\ideal#1{\calI_{#1}}
\global\long\def\image#1#2{#1\left[#2\right]}
\global\long\def\inverse#1{#1^{-1}}
\global\long\def\mathand{\text{ and }}
\global\long\def\N{\mathbb{N}}
\global\long\def\orbitequivalencerelation#1#2{E_{#1}^{#2}}
\global\long\def\pair#1#2{\left(#1,#2\right)}
\global\long\def\relationpower#1#2{#1^{\left(#2\right)}}
\global\long\def\restriction#1#2{#1 \upharpoonright#2}
\global\long\def\setcomplement#1{\twiddle#1}
\global\long\def\sets#1#2{\left[#2\right]{}^{#1}}
\global\long\def\suchthat{\mid}
\global\long\def\twiddle{\raisebox{1pt}{\scalebox{.75}{$\mathord{\sim}$}}}
\global\long\def\C{\mathfrak{C}}
\global\long\def\acl{\operatorname{acl}}
\global\long\def\tp{\operatorname{tp}}
\global\long\def\qf{\operatorname{qf}}
\global\long\def\Nn{\mathbb{N}}
\global\long\def\id{\operatorname{id}}
\global\long\def\SS{\mathcal{P}}
\global\long\def\EM{\operatorname{EM}}
\global\long\def\dcl{\operatorname{dcl}}
\global\long\def\Autf{\operatorname{Aut}f_{L}}
\global\long\def\eq{\operatorname{eq}}
\global\long\def\image{\mbox{image}}

\global\long\def\pamod#1{\pmod#1}

\global\long\def\nf{\mbox{nf}}
\global\long\def\Uu{\mathcal{U}}
\global\long\def\dom{\operatorname{dom}}
\global\long\def\concat{\smallfrown}
\global\long\def\Nn{\mathbb{N}}
\global\long\def\mathrela#1{\mathrel{#1}}
\global\long\def\twiddle{\mathord{\sim}}
\global\long\def\stab{\operatorname{stab}}
 \global\long\def\x{\times}
\global\long\def\diam{\operatorname{diam}}
\global\long\def\EZero{\mathbb{E}_{0}}
\global\long\def\sequence#1#2{\left\langle #1\left|\,#2\right.\right\rangle }
\global\long\def\set#1#2{\left\{  #1\left|\,#2\right.\right\}  }
\global\long\def\cardinal#1{\left|#1\right|}
\global\long\def\calO{\mathcal{O}}
\global\long\def\mathordi#1{\mathord{#1}}
\global\long\def\Ezero{\EZero}
\global\long\def\xx{\mathbf{x}}

\global\long\def\NTPT{\operatorname{NTP}_{\operatorname{2}}}
\global\long\def\ist{\operatorname{ist}}
\global\long\def\C{\mathfrak{C}}
\global\long\def\alt{\operatorname{alt}}
\global\long\def\Ff{\mathbb{F}}
\global\long\def\Ll{\mathfrak{L}}
\global\long\def\calU{\mathcal{U}}
\global\long\def\Qq{\mathbb{Q}}
\global\long\def\AGLn{AGL_{n}\left(\Qq\right)}
\global\long\def\AGLt{AGL_{2}\left(\Qq\right)}
\global\long\def\AGL#1{AGL_{#1}\left(\Qq\right)}
\global\long\def\Zz{\mathbb{Z}}
\global\long\def\GL#1{GL_{#1}\left(\Qq\right)}
\global\long\def\bb{\mathfrak{b}}
\global\long\def\QqOmega{\Qq^{<\omega}}
\global\long\def\PGL#1{PGL_{#1}\left(\Qq\right)}
\global\long\def\PGaL#1#2{P\Gamma L_{#1}\left(#2\right)}
\global\long\def\AGalL#1#2{A\Gamma L_{#1}\left(#2\right)}

\title{The affine and projective groups are maximal }

\author{Itay Kaplan and Pierre Simon}
\begin{abstract}
We show that the groups $\AGL n$ (for $n\geq2$) and $\PGL n$ (for
$n\geq3$), seen as closed subgroups of $S_{\omega}$, are maximal-closed.
\end{abstract}

\thanks{The first author would like to thank the Israel Science foundation
for partial support of this research (Grant no. 1533/14).\\
The research leading to these results has received funding from the
European Research Council under the European Union\textquoteright s
Seventh Framework Programme (FP7/2007- 2013)/ERC Grant Agreement No.
291111.}

\subjclass[2010]{03C40, 51E15, 51E10, 20E28, 20B27. }

\address{Itay Kaplan \\
The Hebrew University of Jerusalem\\
Einstein Institute of Mathematics \\
Edmond J. Safra Campus, Givat Ram\\
Jerusalem 91904, Israel}

\email{kaplan@math.huji.ac.il}

\urladdr{https://sites.google.com/site/itay80/ }

\address{Pierre Simon\\
Institut Camille Jordan\\
Université Claude Bernard - Lyon 1\\
43 boulevard du 11 novembre 1918\\
69622 Villeurbanne Cedex, France}

\email{simon@math.univ-lyon1.fr}

\urladdr{http://www.normalesup.org/\textasciitilde{}simon/}

\maketitle
Subgroups of the infinite symmetric group $S_{\omega}$ (or more generally,
$S_{\kappa}$ for any cardinal) have been studied extensively, and
a lot of work has been done concerning maximal subgroups. For instance,
in \cite{MacphersonPragerCheryl}, the authors proved that if a subgroup
$G\leq S_{\omega}$ is not highly transitive, then it is contained
in a maximal subgroup. In particular, closed (in the usual product
topology) proper subgroups of $S_{\omega}$ are contained in maximal
subgroups. Macpherson and Neumann \cite[Observation 6.1]{NeumannMacpherson}
proved that if $H\leq S_{\omega}$ is maximal and non-open, $H$ must
be highly transitive. In particular, closed non-open subgroups cannot
be maximal. 

Later, in \cite{BaumgartnerShelahSimon}, the authors proved that
it is consistent with ZFC that for every $\kappa\geq\omega$, there
is a subgroup $G\leq S_{\kappa}$ which is not contained in a maximal
subgroup. As far as we know, it is an open question whether this can
be proved in ZFC. However, not much is known about maximal-closed
subgroups (i.e., groups that are maximal in the family of closed subgroups
of $S_{\omega}$). In \cite[Example 7.10]{NeumannMacpherson} several
examples of such groups are given, including $AGL_{\omega}\left(p\right)$
for a prime $p$ (meaning the affine group acting on the infinite
dimensional vector space over $\Ff_{p}$) and $\PGaL{\omega}q$ for
a prime power $q$. 

If $G\leq S_{\omega}$ is the automorphism group of an $\omega$-categorical
structure $M$, then closed supergroups of $M$ in $S_{\omega}$ are
in one-to-one correspondence with reducts of $M$. The full classification
of reducts is known for a number of $\omega$-categorical structures
(\cite{Cameron,Thomas_graph,Thomas_hyper,reductsRamsey}). Nevertheless,
the main question asked by Thomas over 20 years ago remains unresolved:
we do not know if every homogeneous structure on a finite relational
language has only finitely many reducts.

In another direction, Junker and Ziegler \cite{DBLP:journals/jsyml/JunkerZ08}
asked for a converse to that question: if a structure is not $\omega$-categorical,
does it necessarily have infinitely many reducts. Let $M$ be a countable
structure in a language $L$ and let $\Aut\left(M\right)\leq S_{\omega}$
be its automorphism group. A \emph{(proper) group reduct} of $M$
is a structure $M'$ with the same domain as $M$ whose automorphism
group $\Aut\left(M'\right)$ properly contains $\Aut\left(M\right)$.
It is non-trivial if it is not the whole of $S_{\omega}$. On the
other hand, a \emph{(proper) definable reduct} of $M$ is a proper
reduct of the structure $M$ in the usual model theoretic sense (see
Definition \ref{def:reduct}). It is non-trivial if it is not interdefinable
with equality. In general, classifying group reducts and definable
reducts of a given structure are two independent questions: a group
reduct need not be definable, and a definable reduct need not admit
additional automorphisms. Thus Junker and Ziegler's question breaks
into two sub-questions.

In a recent paper \cite{Bodirsky2013}, Bodirsky and Macpherson have
answered the two versions of Junker and Ziegler's question negatively:
there are non-$\omega$-categorical structures which admit no reducts
in either sense. The automorphism groups involved in their work are
of a different flavor than those that appear in the study of $\omega$-categorical
structures. Whereas the techniques used in the study of reducts of
$\omega$-categorical structures involve mainly Ramsey properties,
Bodirsky and Macpherson make use of the theory of Jordan groups. Jordan
groups can be classified according to the type of structure that they
preserve. Bodirsky and Macpherson focus on automorphism groups of
$D$-relations.

In this paper, we study some members of another family of Jordan groups:
that of automorphism groups of Steiner systems. The most natural Steiner
systems are the family of lines in an affine or projective space,
giving rise to the groups $\AGLn$ and $\PGL n$. We show, making
an essential use of Adeleke and Macpherson's classification theorem
of Jordan groups, that those groups are maximal-closed in $S_{\omega}$
(in dimensions larger than 1). This answers a question of Macpherson
and Bodirsky from \cite{Bodirsky2013} about examples of \emph{countable
}maximal-closed subgroups of $S_{\omega}$. We also deduce from these
results that the structures $\left(\mathbb{Q}^{n},f\right)$ for $n\geq1$,
where $f\left(x,y,z\right)=x+y-z$, admit no definable reduct, which
answers another question from \cite{Bodirsky2013}.

After our paper was submitted it came to our attention that in a recent
work, Bogomolov and Rovinsky \cite{bogomolovRovinsky} proved that
for $n\geq3$, $\PGaL nF$ is a maximal-closed subgroup of the group
of all permutations of $F$ for any infinite field $F$, using completely
different methods. In particular their result implies ours on $\PGL n$.
However, our result about $\AGL n$ does not extend to all infinite
fields (see an example after Definition \ref{def:AGL}).

A few words about the proof. The classification theorem for Jordan
groups (Theorem \ref{fac:Macpharson-and-Adeleke}) says that any 3-transitive
Jordan group must preserve a structure of one of the following types:
a cyclic separation relation, a $D$-relation, a $k$-Steiner system,
or a limit of Steiner systems (all of which will be defined later).
Thus to prove that $\AGL n$ say is maximal-closed, we will show that
any group properly containing it does not preserve any one of these
structures. The separation relation, $D$-relation and Steiner system
are treated with combinatorial \emph{ad-hoc} constructions. To rule
out limits of Steiner systems, we use a slightly more general (or
generalizable) construction which shows that if a (sufficiently transitive)
group containing $\AGL n$ preserves a limit of Jordan sets (see Definition
\ref{def:weak limit}), then it actually preserves a Steiner system.
We believe that this argument could apply to other groups as well.
\begin{acknowledgement*}
We would like to thank David Evans for noticing Corollary \ref{cor:David Evans},
which states that $\left(\Qq,f\right)$ has no definable reducts. 
\end{acknowledgement*}

\section{\label{sec:AGL}$\protect\AGL n$ is maximal}
\begin{defn}
\label{def:AGL}For a vector space $V$ over a field $F$, $AGL\left(V\right)$
is the group of permutations of $V$ consisting of maps of the form
$x\mapsto Tx+b$ where $T\in GL\left(V\right)$ (i.e., an invertible
linear map) and $b\in V$. For $n<\omega$, let $\AGL n$ be $AGL\left(\Qq^{n}\right)$
and also let $\AGL{\omega}$ be $AGL\left(\QqOmega\right)$ (where
$\QqOmega$ is the vector space of infinite sequences with finite
support). Similarly we define $\GL n$. 
\end{defn}
Note that for a vector space $V$ over $\Qq$, $AGL\left(V\right)$
is the group of automorphisms of the structure $\left(V,f\right)$
where $f\left(x,y,z\right)=x+y-z$.

The group $S_{\omega}$ of all permutations of $\mathbb{N}$ is a
topological group where the topology is the product topology (i.e.,
the one induced by $\Nn^{\Nn}$). In fact for a natural metric it
is a Polish group --- $d\left(\sigma_{1},\sigma_{2}\right)=1/\left(\min\set n{\sigma_{1}\left(n\right)\neq\sigma_{2}\left(n\right)\vee\sigma_{1}^{-1}\left(n\right)\neq\sigma_{2}^{-1}\left(n\right)}+1\right)$.
For $n\leq\omega$, we can embed $\AGL n$ naturally (after choosing
a bijection from $\Qq^{n}$, or $\QqOmega$, to $\Nn$) as a subgroup
of $S_{\omega}$. It is then a closed subgroup, since it is the automorphism
group of a structure. Note also that for $n<\omega$ it is countable,
while for $n=\omega$ it is uncountable.

\paragraph*{Examples}
\begin{itemize}
\item \cite[Proposition 5.7]{Bodirsky2013}Take the structure $\left(\Qq,f\right)$,
where $f\left(x,y,z\right)=x+y-z$. Then its automorphism group is
$\AGL 1$. It is also countable and closed, but it is not maximal.
For a prime $p$, let $C_{p}\left(x,y,z\right)$ be $v_{p}\left(y-x\right)<v_{p}\left(z-y\right)$,
where $v_{p}$ is the $p$-adic valuation. This is a $C$-relation
on $\Qq$, and its automorphism group is uncountable:

For every $\eta:\omega\to2$, let $\sigma_{\eta}:\Qq\to\Qq$ be the
following map: for each $x\in\Qq$, let $\sum_{i=-n}^{\infty}a_{i}p^{i}$
be its (unique) $p$-adic expansion (where $n\in\Nn$, $a_{i}<p$
and $a_{-n}\neq0$), and let $\sigma_{\eta}\left(x\right)=a_{-n}p^{-n}+\sum_{i=-\left(n+1\right)}^{0}a_{i}^{\eta\left(-i\right)}p^{i}+\sum_{i=1}^{\infty}a_{i}p^{i}$
where $a_{i}^{0}=a_{i}$ and $a_{i}^{1}=a_{i}+1\pmod p$. Then $\set{\sigma_{\eta}}{\eta:\omega\to2}$
is a set of distinct automorphisms of $\left(\Qq,C\right)$. 

\item Now take the group $AGL_{2}\left(R\right)$ where $R$ is any field
extension of $\Qq$. The plane $R^{2}$ can be seen as a $\Qq$-vector
space, and so the group of affine transformation over $\Qq$ properly
contains $AGL_{2}\left(R\right)$ hence that group is not maximal. 
\end{itemize}
In this section we will prove:
\begin{thm}
\label{thm:main}For $2\leq n\leq\omega$, $\AGLn$ is a maximal-closed
subgroup of $S_{\omega}$. So for $2\leq n<\omega$, $\AGL n$ is
a countable maximal-closed group. \end{thm}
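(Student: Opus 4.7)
The plan is to let $G$ be any closed subgroup of $S_\omega$ with $\AGL n \subsetneq G$ and show $G = S_\omega$, which gives maximality. The main tool is the Adeleke--Macpherson classification of infinite primitive Jordan groups (Theorem \ref{fac:Macpharson-and-Adeleke}): such a group, if not highly transitive, must preserve a cyclic betweenness or separation relation, a $C$- or $D$-relation, a Steiner $k$-system, or a limit of Steiner systems. If $G$ is highly transitive and closed, then since every permutation of $\omega$ is a pointwise limit of elements extending arbitrary finite partial permutations, one gets $G = S_\omega$ and we are done. Otherwise I would verify that $G$ is a primitive Jordan group and then rule out each of the four alternatives.

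Verifying the hypotheses is straightforward. For $n \geq 2$, $\AGL n$ is 2-transitive on its natural domain ($\Qq^n$ or $\QqOmega$), hence so is $G$, giving primitivity and 2-transitivity. Moreover $\AGL n$ is itself a Jordan group: the complement of any proper affine subspace $A$ is a Jordan set, since translations by vectors not parallel to $A$, composed with linear maps fixing $A$ pointwise, act transitively on $\Qq^n \setminus A$. Any Jordan set for $\AGL n$ remains Jordan for $G \supseteq \AGL n$, so $G$ is indeed a primitive Jordan group and the classification theorem applies.

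The first three cases should succumb to combinatorial \emph{ad hoc} arguments exploiting the affine structure. For a $G$-invariant cyclic separation relation or $D$-relation, the abundance of translations and of central reflections $x \mapsto 2p - x$ in $\AGL n$ (each swapping pairs collinear through $p$ while fixing $p$) produces incompatibilities, since such maps reverse any putative betweenness along a line. For a $G$-invariant Steiner $k$-system $\Sigma$ one has to be slightly more careful: the affine lines already form an $\AGL n$-invariant Steiner 2-system, and the full automorphism group of this affine-line geometry over $\Qq$ is exactly $\AGL n$ (since $\Qq$ has no nontrivial field automorphisms), so if $\Sigma$ were the affine-line system then $G \subseteq \AGL n$, contradicting $G \supsetneq \AGL n$. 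Hence $\Sigma$ must be a genuinely different Steiner system, and an analysis of how the $\AGL n$-orbit of a block must intersect the affine-line system (using that every pair of points generates a unique affine line and a unique $\Sigma$-block) should force a contradiction with the 2-transitivity of $\AGL n$.

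The hardest case, and the main obstacle, is ruling out a $G$-invariant limit of Steiner systems. The strategy indicated in the introduction is to prove a general reduction: a sufficiently transitive closed group containing $\AGL n$ which preserves a limit of Jordan sets already preserves an honest Steiner system, thereby reducing this case to the previous one. Implementing this reduction means showing that the nested family of Jordan sets witnessing the limit can be \emph{aligned} into a single block structure using the translations and dilations of $\AGL n$, and this is where the delicate work will lie; the introduction hints that the argument is designed to be generalisable beyond $\AGL n$. Once the reduction is in place, all four cases of the classification have been excluded, so $G$ must be highly transitive, hence equal to $S_\omega$, completing the proof.
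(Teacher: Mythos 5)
Your high-level plan coincides with the paper's: show that any $G\supsetneq\AGL n$ is highly transitive, hence (being dense) equal to $S_{\omega}$ if closed, by verifying that $G$ is a Jordan group and excluding each alternative in the Adeleke--Macpherson classification. But there is a concrete gap at the very first step: you only verify that $G$ is $2$-transitive, whereas the classification theorem you invoke is the one for \emph{$3$-transitive} Jordan groups, and it is only under $3$-transitivity that the list of alternatives collapses to the four you go on to discuss. For merely primitive (or $2$-transitive) Jordan groups the classification contains further cases --- $C$-relations, and limits of $D$-relations and of betweenness relations --- which you neither list in full nor rule out; indeed even your own statement of the theorem includes $C$-relations and cyclic betweenness, which your case analysis never touches. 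This is not a formality: $\AGL 1$ itself is $2$-transitive and \emph{does} preserve a $C$-relation (via the $p$-adic valuation), which is exactly why the theorem fails for $n=1$. The paper closes this gap by first proving that every $G\supsetneq\AGL n$ is $3$-transitive (Corollary \ref{cor:3-trans}); this requires the fundamental theorem of affine geometry together with a genuine argument (Lemma \ref{lem:if sigma doesn't preserve lines}) showing that a permutation not preserving some line through $0$ must send, for \emph{every} ratio $r$, some collinear triple of ratio $r$ to a non-collinear triple, so that combined with the orbit description of Lemma \ref{lem:orbits} all triples fall into one $G$-orbit. Nothing in your proposal supplies this step, and the $3$-transitivity is also used later (e.g.\ to exclude $2$-Steiner systems).

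Beyond that, the two hardest cases are left as declarations of intent rather than arguments. For Steiner $k$-systems with $k\geq3$ you say an analysis ``should force a contradiction''; the paper needs a chain of lemmas here (blocks generated by points in a proper affine subspace lie in that subspace, via the Jordan property of complements of subspaces; blocks have at least $k+2$ points; and then a dilation/translation argument producing infinitely many block points on a single line). Your suggestion that a $G$-invariant $2$-Steiner system must literally be the line system before one can apply the fundamental theorem also skips the step showing blocks are contained in (but a priori not equal to) lines. For the limit of Steiner systems you explicitly defer ``the delicate work'': the paper's reduction (Section \ref{sub:A-limit-of}) constructs an $m$-Steiner system from the chain $\sequence{\Pi_{i}}{i\in J}$ via the notion of large tuples, and proving that largeness is permutation-invariant (Lemma \ref{lem:permutations}) requires showing that for large $i$ the set $\Pi_{i}$ absorbs the integer lattice, which is the technical heart of the section. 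As it stands the proposal is a correct outline of the paper's strategy, but the steps that constitute the actual proof are missing.
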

\begin{assumption}
Throughout this section, we fix $2\leq n\leq\omega$. Let $\Omega=\Qq^{n}$
in case $n<\omega$ and $\QqOmega$ in case $n=\omega$. 
\end{assumption}
As we noted, $\AGL n$ is closed. In order to prove that it is maximal,
it is enough to show that any group $G$ containing it is \emph{highly
transitive}, i.e., $k$-transitive for every $k<\omega$. So this
is what we will do. 
\begin{defn}
For 3 collinear points $\left(a,b,c\right)\in\Omega^{3}$ we will
say that they have \emph{ratio $r\in\Qq$} if $c=ra+\left(1-r\right)b$. 
\end{defn}
The following lemma (whose proof we leave to the reader) describes
the orbits of triples in $\Omega$ under the action of $\AGL n$.
\begin{lem}
\label{lem:orbits}$\AGL n$ is 2-transitive. Moreover, the orbits
of its action on triples of distinct elements from $\Omega$ are:
\begin{itemize}
\item For each $r\in\Qq\backslash\left\{ 0,1\right\} $ and $a\neq b$,
$\set{\left(a,b,c\right)}{c=ra+\left(1-r\right)b}$ --- all triples
of ratio $r$.
\item $\set{\left(a,b,c\right)}{\left(a,b,c\right)\mbox{ are not collinear}}$. 
\end{itemize}
\end{lem}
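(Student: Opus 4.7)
The plan is to use 2-transitivity as a normalization step and then analyze the stabilizer of a fixed pair to classify triples.

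First I would verify 2-transitivity. Given distinct pairs $\left(a_{1},a_{2}\right),\left(b_{1},b_{2}\right)\in\Omega^{2}$, translate by $-a_{1}$ and $-b_{1}$ to reduce to the case $a_{1}=b_{1}=0$. It then suffices to exhibit some $T\in GL\left(\Omega\right)$ with $Ta_{2}=b_{2}$; since both vectors are non-zero, one extends each to a basis of a finite-dimensional subspace containing them both, defines $T$ on that subspace, and extends $T$ by the identity on a complement (the last step is only needed when $n=\omega$).

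For the orbit description, let $\left(a,b,c\right)$ be a triple of distinct points. Using 2-transitivity, conjugate to assume $a=0$ and $b=v$ for some fixed non-zero $v$, and let $c'$ denote the image of $c$. The stabilizer of $\left(0,v\right)$ in $\AGL n$ is exactly $\set{T\in GL\left(\Omega\right)}{Tv=v}$. If $c'=\lambda v$ for some $\lambda\in\Qq$, then every element of the stabilizer fixes $c'$, so $\lambda$ is a complete orbit invariant. Unwinding the normalization, $c=a+\lambda\left(b-a\right)=\left(1-\lambda\right)a+\lambda b$, so $\lambda$ corresponds to the ratio $r=1-\lambda$ in the statement, and the conditions $c\neq a,b$ translate to $\lambda\notin\left\{ 0,1\right\}$, equivalently $r\in\Qq\setminus\left\{ 0,1\right\}$. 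If instead $c'\notin\Qq v$, then $\left\{ v,c'\right\}$ is linearly independent; given any second such normalized triple $\left(0,v,c''\right)$ with $c''\notin\Qq v$, pick bases of the finite-dimensional subspace $\Qq v+\Qq c'+\Qq c''$ extending $\left\{ v,c'\right\}$ and $\left\{ v,c''\right\}$ respectively, build a $GL$-map matching them on this subspace, and extend by the identity on a complement. This element of the stabilizer fixes $v$ and sends $c'$ to $c''$, so all non-collinear triples lie in a single orbit.

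The listed orbits are plainly distinct, since collinearity and the ratio on a line are preserved by affine maps. I do not expect any real obstacle: the argument is essentially linear algebra, and the only mild issue in the $n=\omega$ case is to remember that all constructions take place in a finite-dimensional subspace spanned by the finitely many points involved, with $GL$-maps extended trivially elsewhere.
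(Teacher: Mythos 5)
Your proof is correct and complete. The paper explicitly leaves this lemma's proof to the reader, and your argument — normalizing to $\left(0,v,c'\right)$ via 2-transitivity, observing that the stabilizer of $\left(0,v\right)$ is $\set{T\in GL\left(\Omega\right)}{Tv=v}$, and splitting on whether $c'\in\Qq v$ — is exactly the standard linear-algebra argument the authors intend, including the correct handling of the $n=\omega$ case by working in a finite-dimensional subspace and extending by the identity.
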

\begin{fact}
\label{fac:The-fundamental-theorem} \cite{MR0137019,MR1629468}(The
fundamental theorem of affine geometry) For a permutation $\sigma$
of $\Omega$, $\sigma\in\AGL n$ iff $\sigma$ preserves lines iff
$\sigma$ takes triples of collinear points to collinear points. \end{fact}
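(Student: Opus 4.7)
The plan is to address the only substantive direction, $(3) \Rightarrow (1)$; the implications $(1) \Rightarrow (2) \Rightarrow (3)$ are routine because any map $x \mapsto Tx + b$ in $\AGL n$ sends the line through $a, a'$ to the line through $Ta + b, Ta' + b$, hence in particular preserves collinearity of triples. So fix a permutation $\sigma$ of $\Omega$ that maps every collinear triple to a collinear triple. First I would promote this to the statement that $\sigma$ restricts to a bijection from each line $L$ onto a line $L'$: for distinct $p, q \in L$, every other $r \in L$ has $\sigma(p), \sigma(q), \sigma(r)$ collinear, so $\sigma(L)$ is contained in the line $L'$ through $\sigma(p)$ and $\sigma(q)$; if some $y \in L'$ were not in $\sigma(L)$ and $y = \sigma(x)$ with $x \notin L$, then two distinct lines through $x$ would have $\sigma$-images meeting $L'$, which a short incidence argument rules out. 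Since $\sigma$ is a bijection, $\sigma^{-1}$ then also carries lines bijectively to lines, and $\sigma$ in fact preserves non-collinearity of triples as well.

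Next, since $\AGL n$ acts transitively on affine frames, I would compose $\sigma$ with a suitable element of $\AGL n$ to assume $\sigma$ fixes the origin $0$ and a chosen basis $\{e_i\}$---for $n = \omega$, a Hamel basis of $\QqOmega$. The geometric heart is then to establish that $\sigma$ preserves parallelism. For $n = 2$ this is immediate: two lines in a plane are parallel exactly when they are disjoint, and a bijection mapping lines to lines preserves disjointness. For $n \geq 3$ I would induct on $k$ to show that $\sigma$ sends each $k$-dimensional affine flat bijectively onto a $k$-flat, using the inductive description of a $k$-flat as the union of all lines joining a point off a $(k-1)$-flat to the points of that $(k-1)$-flat. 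Once planes are preserved, two coplanar disjoint lines go to two coplanar disjoint lines, and parallelism is preserved.

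With parallelism in hand, the parallelogram rule supplies additivity: for linearly independent $x, y$, the sum $x + y$ is the unique fourth vertex of the parallelogram with vertices $0, x, y$, so $\sigma(x + y) = \sigma(x) + \sigma(y)$; linearly dependent pairs reduce to the independent case using a third vector, available since $n \geq 2$. Additivity then makes $\sigma$ restrict to an additive endomorphism on each axis $\Qq \cdot e_i$, and comparing different axes via parallelograms shows that a single additive map $\mu \colon \Qq \to \Qq$ satisfies $\sigma(r v) = \mu(r) \sigma(v)$ for all $r \in \Qq$ and $v \in \Omega$. Composing two scalings along intersecting lines forces multiplicativity of $\mu$, so $\mu$ is a field endomorphism of $\Qq$; but the only such endomorphism is $\id$, so $\sigma$ is $\Qq$-linear and, by the normalization, equals $\id$. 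Undoing the normalization places the original $\sigma$ in $\AGL n$.

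The hard step will be the passage from line-preservation to flat-preservation in dimension at least three: characterizing planes combinatorially from lines (and iterating to higher flats) is the classical technical content of the fundamental theorem and is precisely where \cite{MR0137019, MR1629468} are invoked. The remaining ingredients---frame normalization, the parallelogram rule, and the rigidity of $\Qq$ as a field---are comparatively mechanical once parallelism is available.
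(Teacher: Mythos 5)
Your proposal follows essentially the same route the paper indicates: the statement is a cited Fact, and the remark immediately after it sketches precisely your argument --- first upgrade collinearity-preservation to preservation of planes (via the observation that intersecting lines are coplanar, iterated to higher flats), then derive additivity geometrically from the parallelogram of parallel lines, with the triviality of $\Aut\left(\Qq\right)$ (indeed, additivity alone already forces $\Qq$-linearity) collapsing semilinear to affine. The one ordering quibble is that normalizing $\sigma$ to fix a full affine frame requires already knowing that $\sigma$ preserves affine independence, so that step belongs after the flat-preservation induction rather than before it; this does not affect the substance.
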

\begin{rem}
In the proof of Fact \ref{fac:The-fundamental-theorem}, one first
proves that if $\sigma$ maps lines to lines (equivalently 3 collinear
points to 3 collinear points), then it must take affine planes to
affine planes. This is done by noting that if two lines intersect,
then they lie on the same plane. Then, the main point is to prove
additivity of $\sigma$, which is done geometrically, i.e., assuming
that $\sigma\left(0\right)=0$, for any $u,v\neq0$ in $\Omega$,
$u+v$ is the intersection of the lines $L_{1}$ --- the unique line
in the plane containing $u,v$ which is parallel to $v$ containing
$u$ --- and $L_{2}$ which is defined similarly. Since $\sigma\left(u+v\right)$
is in the same plane spanned by $\sigma(0)$, $\sigma\left(u\right)$
and $\sigma\left(v\right)$, the same geometrical property holds for
$\sigma\left(u+v\right)$. \end{rem}
\begin{lem}
\label{lem:if sigma doesn't preserve lines}Let $\sigma$ be any permutation
of $\Omega$. Suppose that $L$ is a line containing $0$ which $\sigma$
does not map to a line. Then for any $r\in\Qq\backslash\left\{ 0,1\right\} $
there are 3 collinear point of ratio $r$ on $L$ which are sent by
$\sigma$ to 3 non-collinear points. \end{lem}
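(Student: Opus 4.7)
The plan is to prove the contrapositive: I will assume that every triple of three distinct points of ratio $r$ on $L$ is sent by $\sigma$ to three collinear points --- call this $(\star_{r})$ --- and derive that $\sigma(L)$ is contained in a single affine line, contradicting the hypothesis. Parameterize $L$ by $\Qq$ via an affine isomorphism sending $0\in L$ to $0\in\Qq$, and write $g:=\restriction{\sigma}{L}$. Set $\ell_{0}:=\text{line}(g(0),g(1))$ and $A:=g^{-1}(\ell_{0})$, so $\{0,1\}\subseteq A$; the goal becomes $A=\Qq$, which would force $g(\Qq)\subseteq\ell_{0}$, contradicting the assumption that $\sigma$ does not map $L$ to a line.

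The first step records the closure properties of $A$ implied by $(\star_{r})$. If two of the three distinct points $\{x,y,\phi_{r}(x,y)\}$ lie in $A$ (where $\phi_{r}(x,y):=rx+(1-r)y$), the line through their $g$-images equals $\ell_{0}$, so by $(\star_{r})$ the third's $g$-image also lies on $\ell_{0}$, placing it in $A$. Specializing one argument to $0$ and using both directions of this closure ($rx=\phi_{r}(x,0)$, $(1-r)y=\phi_{r}(0,y)$ forward; and reverse-solving $\phi_{r}(x/r,0)=x$, $\phi_{r}(0,y/(1-r))=y$) shows $A$ is closed under multiplication and division by $r$ and by $1-r$.

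The heart of the argument is the following two-plane construction, producing a much stronger closure. Given $u,v\in A$ with $u\neq v$, consider the unique pair $(c,d)$ satisfying $\phi_{r}(u,d)=c$ and $\phi_{r}(c,v)=d$; a short calculation yields $c=(ru+(1-r)^{2}v)/(1-r+r^{2})$ and verifies $c\neq u,v,d$ (using $r\neq 0,1$ and $u\neq v$). I claim $c\in A$. Suppose not; then $g(c)\notin\ell_{0}$, and since $g(u),g(v)\in\ell_{0}$ are distinct, the points $g(u),g(v),g(c)$ span an affine $2$-plane $\pi$ with basis $p:=g(v)-g(u)$, $q:=g(c)-g(u)$ from origin $g(u)$. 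Applying $(\star_{r})$ to the two relations gives $q=\lambda(g(d)-g(u))$ and $g(d)-g(u)=(1-\mu)q+\mu p$ for some $\lambda,\mu\in\Qq$. Combining, $q/\lambda=(1-\mu)q+\mu p$, and the linear independence of $p,q$ forces $\mu=0$ and $\lambda=1$, whence $g(d)=g(u)+q=g(c)$; since $c\neq d$, this contradicts injectivity of $g$. Thus $c\in A$, so $A$ is closed under $(u,v)\mapsto(ru+(1-r)^{2}v)/(1-r+r^{2})$, an operation of the form $\phi_{r'}$ with $r'=r/(1-r+r^{2})$. Analogous two-step chains with differently placed arguments (e.g., $\phi_{r}(d,u)=c$ and $\phi_{r}(c,v)=d$) produce closure of $A$ under further derived ratios such as $1/(1+r)$ and $1/(2-r)$.

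The main obstacle is the final step: deducing $A=\Qq$. Pure $\phi_{r}$-closure from $\{0,1\}$ only generates a $\Zz[r]$-sized subset, so the argument must combine the derived closures above with the composition identities $\phi_{r_{1}}(\phi_{r_{2}}(x,y),y)=\phi_{r_{1}r_{2}}(x,y)$ and $\phi_{r_{1}}(x,\phi_{r_{2}}(x,y))=\phi_{1-(1-r_{1})(1-r_{2})}(x,y)$, which propagate closure of $A$ under $\phi_{r_{1}},\phi_{r_{2}}$ to closure under $\phi_{r_{1}r_{2}}$ and $\phi_{1-(1-r_{1})(1-r_{2})}$. Iterating these rules together with the multiplicative closures of $A$ by $r$, $1-r$, $1-r+r^{2}$ and their inverses, one argues that $A$ eventually contains elements of every denominator, hence $A=\Qq$; establishing that this iterative sweep covers all rationals --- rather than stabilizing at some proper sub-ring --- is the delicate point of the proof.
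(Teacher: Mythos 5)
Your setup (reduction to the contrapositive, the set $A=g^{-1}(\ell_{0})$, the third-point closure rule, and the multiplicative closures by $r^{\pm1}$ and $(1-r)^{\pm1}$) is correct, and the two-plane construction is a valid and rather nice way to extract a contradiction from $g(c)\notin\ell_{0}$. But the proof is incomplete at exactly the point you flag: you never show that these closure operations generate all of $\Qq$ from $\{0,1\}$, and this is not a routine verification. Closure under the ratio-$r$ operations alone yields only something like $\Zz[r^{\pm1},(1-r)^{\pm1}]$ (e.g.\ $\Zz[1/2]$ for $r=2$), which is a proper subring; the derived ratios from the two-plane construction add further denominators such as $1-r+r^{2}$, but whether iterating this sweep reaches every prime denominator is a genuine arithmetic question that you leave open (note also that one of your asserted derived ratios, $1/(2-r)$, is undefined for $r=2$, so the "analogous chains" would themselves need case-by-case checking). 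As written, the argument could stabilize at a proper subring of $\Qq$, so the lemma is not established.

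The paper's proof avoids the generation problem entirely by a change of viewpoint: instead of the single set $A$, it works with the equivalence relation $E$ on $L\setminus\{0\}$ defined by $a\mathrel{E}b$ iff $\sigma(b)$ lies on the line through $\sigma(0)$ and $\sigma(a)$. Because every class is anchored at the common image point $\sigma(0)$, ratio-$r$ triples containing $0$ show that each class $[a]_{E}$ contains $ra$, $(1-r)a$, $\frac{1}{r}a$, $\frac{1}{1-r}a$; one then gets $a\mathrel{E}-a$ and additive closure within a class ($b\mathrel{E}a\Rightarrow a+b\mathrel{E}a$, via the ratio-$r$ triple $\left(\frac{1}{r}a,\frac{1}{1-r}b,a+b\right)$), hence $na\mathrel{E}a$ for all nonzero $n\in\Zz$ and \emph{all} $a$ simultaneously. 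Applying this to $a/n$ gives $a/n\mathrel{E}a$, so $[a]_{E}=L\setminus\{0\}$ with no denominators to generate. If you want to salvage your version, the missing idea is precisely this symmetry: your $A\setminus\{0\}$ is the $E$-class of $1$, and to conclude $1/n\in A$ you must consider the class of $1/n$ (a different line through $\sigma(0)$), observe that it contains $n\cdot(1/n)=1$, and use symmetry of $E$ --- rather than trying to reach $1/n$ by closure operations inside $A$ alone.
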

\begin{proof}
For $a\neq b\in\Omega$, denote by $L\left(a,b\right)=\set{c\in\Omega}{\exists r\left(c=ra+\left(1-r\right)b\right)}$
the unique line that contains both $a$ and $b$. Let $E$ be the
equivalence relation defined on $L\backslash\left\{ 0\right\} $ by:
$a$ and $b$ are equivalent if $L\left(\sigma\left(0\right),\sigma\left(a\right)\right)=L\left(\sigma\left(0\right),\sigma\left(b\right)\right)$.
Equivalently, $a$ and $b$ are equivalent if $\sigma\left(b\right)\in L\left(\sigma\left(0\right),\sigma\left(a\right)\right)$. 

Suppose for contradiction that for some $r\in\Qq\backslash\left\{ 0,1\right\} $,
all 3 collinear points of ratio $r$ are sent by $\sigma$ to collinear
points. This means that for any $a\in L\backslash\left\{ 0\right\} $,
$\left[a\right]_{E}$ contains $ra$ (as $\left(a,0,ra\right)$ has
ratio $r$), $\left(1-r\right)a$ (as $\left(0,a,\left(1-r\right)a\right)$
has ratio $r$), $\frac{1}{r}a$ (as $\left(\frac{1}{r}a,0,a\right)$
has ratio $r$) and $\frac{1}{1-r}a$ (as $\left(0,\frac{1}{1-r}a,a\right)$
has ratio $r$).

Next, we note that $a\mathrela E-a$ for $a\in L\backslash\left\{ 0\right\} $.
Since $0=a-a=r\left(\frac{1}{r}a\right)+\left(1-r\right)\left(-\frac{1}{1-r}a\right)$,
$\left(\frac{1}{r}a,-\frac{1}{1-r}a,0\right)$ has ratio $r$, so
$\sigma\left(0\right)\in L\left(\sigma\left(\frac{1}{r}a\right),\sigma\left(-\frac{1}{1-r}a\right)\right)$
(note that $\frac{1}{r}\neq-\frac{1}{1-r}$). This implies that $\frac{1}{r}a\mathrela E-\frac{1}{1-r}a$.
But we already know that $\frac{1}{r}a\mathrela Ea$ and $\frac{1}{1-r}\left(-a\right)\mathrela E-a$.
Together we are done.

Now we will show that if $b\mathrela Ea$ then $a+b\mathrela Ea$
for $a,b\in L\backslash\left\{ 0\right\} $. There are two cases to
consider. First, assume that $\frac{1}{r}a=\frac{1}{1-r}b$: this
implies that $a+b=\frac{1}{r}a$, which we already know is $E$-equivalent
to $a$. If not, then as $\left(\frac{1}{r}a,\frac{1}{1-r}b,a+b\right)$
is a tuple of 3 collinear points of ratio $r$, our assumption implies
that $\sigma\left(a+b\right)$ is in $L\left(\sigma\left(\frac{1}{r}a\right),\sigma\left(\frac{1}{1-r}b\right)\right)$.
Since $\frac{1}{1-r}b\mathrela Eb\mathrela Ea\mathrela E\frac{1}{r}a$,
this line equals $L\left(\sigma\left(0\right),\sigma\left(a\right)\right)$
and we are done.

We conclude that for all $a\in L\backslash\left\{ 0\right\} $ and
$n\in\Zz$, $na\mathrela Ea$. But then $\frac{1}{n}a\mathrela Ea$
as well, so $\left[a\right]_{E}=L\backslash\left\{ 0\right\} $. This
contradicts our assumption. \end{proof}
\begin{cor}
\label{cor:3-trans}If $G$ is a permutation group of $\Omega$ properly
containing $\AGL n$, then $G$ is 3-transitive.\end{cor}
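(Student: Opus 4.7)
The plan is to show that any proper closed-supergroup $G$ of $\AGL n$ merges all the $\AGL n$-orbits on triples of distinct elements into a single $G$-orbit, which is exactly the statement of 3-transitivity.

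First, fix some $\sigma \in G \setminus \AGL n$. By the fundamental theorem of affine geometry (Fact \ref{fac:The-fundamental-theorem}), $\sigma$ fails to send some line $L'$ to a line. Pick any point $a \in L'$ and let $\tau \in \AGL n \subseteq G$ be the translation sending $0$ to $a$; then $\sigma' = \sigma \circ \tau$ lies in $G$ and does not map the line $L = \tau^{-1}(L')$, which now contains $0$, to a line. So after replacing $\sigma$ by $\sigma'$ we may assume without loss of generality that there is a line $L$ through $0$ that $\sigma$ does not map to a line.

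Now Lemma \ref{lem:if sigma doesn't preserve lines} applies: for \emph{every} $r \in \Qq \setminus \{0,1\}$, there exist three collinear points on $L$ of ratio $r$ whose image under $\sigma$ is non-collinear. In other words, for each such $r$, the $G$-orbit of the ratio-$r$ triples (which is a single $\AGL n$-orbit by Lemma \ref{lem:orbits}) meets the $G$-orbit of the non-collinear triples (again a single $\AGL n$-orbit). Since $\AGL n \leq G$, these two $\AGL n$-orbits lie in a common $G$-orbit.

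By Lemma \ref{lem:orbits}, the $\AGL n$-orbits on ordered triples of distinct elements of $\Omega$ consist exactly of the non-collinear orbit together with one orbit per ratio $r \in \Qq \setminus \{0,1\}$. The previous paragraph shows that all of these $\AGL n$-orbits merge into a single $G$-orbit, so $G$ is transitive on ordered triples of distinct elements, i.e., 3-transitive. The only subtlety worth flagging is the WLOG step reducing to a line through $0$, which is what makes Lemma \ref{lem:if sigma doesn't preserve lines} directly applicable; everything else is a bookkeeping consequence of that lemma together with Lemma \ref{lem:orbits}.
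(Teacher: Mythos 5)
Your proof is correct and follows essentially the same route as the paper's: precompose with a translation to get a line through $0$ that $\sigma$ does not preserve, then combine Lemma \ref{lem:if sigma doesn't preserve lines} with the orbit description in Lemma \ref{lem:orbits}. You have merely spelled out the orbit-merging step that the paper leaves implicit.
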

\begin{proof}
By Fact \ref{fac:The-fundamental-theorem}, there is some $\sigma\in G$
which does not preserve lines. By precomposing with an element from
$\AGL n$, we may assume that there is a line $L$ containing $0$
which $\sigma$ does not preserve. The corollary follows from Lemma
\ref{lem:orbits} and Lemma \ref{lem:if sigma doesn't preserve lines}. 
\end{proof}
Now we are going to use a theorem of Adeleke and Macpherson about
the classification of Jordan groups. 
\begin{defn}
Suppose $G$ is a group acting on a set $X$. A set $\emptyset\neq A\subseteq X$
of size at least 2 is a\emph{ Jordan set }if the pointwise stabilizer
$G_{X\backslash A}$ acts transitively on $A$. If there is a Jordan
set $A\subseteq X$ such that for all $k\in\Nn$ for which $G$ is
$k$-transitive, $\left|X\backslash A\right|\geq k$, then $\left(G,X\right)$
(or just $G$) is called a \emph{Jordan group}. 
\end{defn}
Let $A$ be an affine subspace of $\Omega$ of dimension $<n$. Then
$\Omega\backslash A$ is a Jordan set. To see this, assume without
loss that $A$ is a linear subspace (i.e., $0\in A$). Then, let $B$
be a basis of $A$, and let $x,y\notin A$. Then $B\cup\left\{ x\right\} $
and $B\cup\left\{ y\right\} $ are both independent, so there is some
$\sigma\in\GL n$ taking $x$ to $y$ while fixing $B$ (so also $A$).
 Hence $\Omega\backslash A$ witnesses that $\AGL n$ is a Jordan
group. In fact, any group $G$ containing $\AGL n$ is a Jordan group
by the same argument. 

Let us now turn to the classification theorem of Adeleke and Macpherson:
\begin{fact}
\label{fac:Macpharson-and-Adeleke}\cite[Theorem 1.0.2]{MR1357089}(Adeleke
and Macpherson, 1995) Suppose $G$ is an infinite 3-transitive Jordan
group acting on a space $X$ which is not highly transitive. Then
$G$ must preserve on $X$ one of the following structures:
\begin{enumerate}
\item a cyclic separation relation;
\item a $D$ relation;
\item a Steiner $k$ system for $k\geq2$;
\item a limit of Steiner systems. 
\end{enumerate}
\end{fact}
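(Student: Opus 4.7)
The strategy I would follow is the one Adeleke and Macpherson themselves pursue: extract from the family of Jordan sets of $\left(G,X\right)$ a canonical $G$-invariant geometric relation, and show by a careful combinatorial analysis that this relation must belong to one of the four named classes. The starting data is the family $\mathcal{F}$ of Jordan complements (the complements $F = X\setminus A$ of Jordan sets $A$). Because $G$ is a Jordan group that is $3$-transitive but not highly transitive, $\mathcal{F}$ is a nonempty, $G$-invariant family, contains sets of arbitrarily small finite size (by applying $k$-transitivity for $k$ up to the transitivity degree of $G$), but is simultaneously nontrivially constrained (otherwise $G$ would be highly transitive).

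The first step would be to introduce a closure-type operator on finite subsets $B\subseteq X$ by $\langle B\rangle := \bigcap\set{F\in\mathcal{F}}{B\subseteq F}$ (with a small adjustment when this intersection is empty, using a cofinal chain of Jordan complements containing $B$). Because $G$ is $3$-transitive, the cardinality and qualitative behaviour of $\langle B\rangle$ depends only on $|B|$ for $|B|\leq 3$, and the basic dichotomy is then: either $\langle\{a,b\}\rangle = \{a,b\}$ for distinct $a,b\in X$ (``pairs are closed''), or $\langle\{a,b\}\rangle$ is a proper superset, which we interpret as the ``line'' through $a$ and $b$.

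In the first case, I would argue that $\mathcal{F}$ organizes as a block system. Let $k$ be the least integer such that $\langle B\rangle \supsetneq B$ for some $|B|=k$; using $3$-transitivity together with the Jordan property (to move one point of a $k$-set while fixing the others inside a common Jordan complement), one verifies the Steiner axiom that every $k$ points lie in a unique minimal closure, giving a Steiner $k$-system (case~3). If no such $k$ exists, closures keep growing, and one builds an inverse system indexed by size, whose limit is a limit of Steiner systems (case~4). In the second case, analyse triples on a common line $\langle\{a,b\}\rangle$: depending on whether the induced ternary structure is cyclic or branching, one recovers either a cyclic separation relation (case~1) or a $D$-relation (case~2). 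The relevant ternary predicates are definable from $\mathcal{F}$ by asking how Jordan complements separate the three points on the line.

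The main obstacle, and what really makes the theorem hard, is to prove that the four cases are exhaustive and that no hybrid invariant relation can arise. This amounts to the following rigidity statement: any $G$-invariant relation on $X$ of small arity coming from the behaviour of $\mathcal{F}$ must agree with the signature of exactly one of the four geometries. The proof requires a detailed case analysis on the $G$-orbits of $4$-tuples, using the Jordan property to supply enough partial symmetries to force canonical configurations, and using the failure of high transitivity to guarantee that some such orbit structure actually survives. A key technical ingredient is the analysis of how two Jordan sets $A_{1},A_{2}$ can intersect: the four pieces $A_{1}\cap A_{2}$, $A_{1}\setminus A_{2}$, $A_{2}\setminus A_{1}$, $X\setminus(A_{1}\cup A_{2})$ admit only a rigid combinatorial list of possible ``nesting'' or ``crossing'' patterns, and each pattern corresponds to one of the four outcomes. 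Completing this step, essentially a finite classification propagated by induction on the transitivity degree, is where the bulk of the work in the Adeleke--Macpherson paper lies, and I would expect any honest proof to require substantial bookkeeping of exactly this kind.
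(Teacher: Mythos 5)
This statement is not proved in the paper at all: it is imported as a black box, a citation of Theorem~1.0.2 of Adeleke and Macpherson's memoir \cite{MR1357089} (the paper even tells the reader that the relevant clauses there are (v) and (vi)). That theorem is the culmination of a book-length classification of infinite primitive Jordan groups, and the entire strategy of the present paper is to \emph{use} it, not reprove it. Your proposal, by your own admission, defers ``the bulk of the work'' --- namely the exhaustiveness of the four cases --- to ``substantial bookkeeping'' that you do not carry out. Since that exhaustiveness \emph{is} the theorem, the proposal is a plan for a proof, not a proof, and no amount of charity closes that gap: one either cites \cite{MR1357089} or reproduces a couple of hundred pages of case analysis.

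Beyond incompleteness, the organizing dichotomy you propose misroutes the standard examples, so the plan itself would fail. Consider $\PGL{2}$ acting on the projective line over $\Qq$: this is the prototypical $3$-transitive Jordan group preserving a cyclic separation relation, its Jordan sets are arcs, and the intersection of all Jordan complements containing two points is just those two points --- so your ``pairs are closed'' branch captures it and sends it toward Steiner systems or limits thereof, the wrong outcome. Conversely, for $\AGL{n}$ the closure $\langle\{a,b\}\rangle$ is the affine line through $a$ and $b$, so your second branch captures it and sends it toward separation or $D$-relations, whereas it in fact preserves a Steiner $2$-system (the lines). In the actual classification, the separation and $D$-relations are not extracted from ternary structure ``on a line'' but from the global nesting/crossing typology of the family of Jordan sets --- the one ingredient you mention correctly, but which must drive all four cases rather than appear as a final rigidity check. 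Two further inaccuracies: a limit of Steiner systems is a strictly increasing \emph{chain} $\sequence{\Pi_{i}}{i\in J}$ with union $X$, indexed by a totally ordered set with no greatest element (exactly as in the paper's Definition in Section~\ref{sub:A-limit-of}), not an inverse system ``indexed by size''; and the small finite Jordan complements you invoke via $k$-transitivity are precisely the \emph{improper} Jordan sets that the Adeleke--Macpherson analysis must set aside, so they give no purchase on the structure of $\mathcal{F}$.
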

For the reader who looks at the reference in \cite{MR1357089}, the
relevant clauses in Theorem 1.0.2 there are (v) and (vi). Also note
that $G$ is automatically primitive (i.e., does not preserve any
non-trivial equivalence relation on $\Omega$), since it is 2-transitive. 

We will show that in fact $\AGL n$ does not preserve any of the structures
in (1)--(3) (except the case $k=2$ in (3)), and that any group properly
containing it does not preserve a structure of the form (3) or (4).
Using Corollary \ref{cor:3-trans} (3-transitivity) we conclude that
any such group is highly transitive. 

So in each following subsection we will rule out one of these structures.
Our definitions are all taken from \cite{MR1632579} except for that
of a limit of Steiner systems, which is taken from \cite{MR1357089}.

\subsection{\label{sub:A-cyclic-separation}A cyclic separation relation}
\begin{defn}
A quaternary relation $S$ defined on a set $X$ is a \emph{cyclic
separation relation} if it satisfies the following conditions for
all $\alpha,\beta,\gamma,\delta,\varepsilon\in X$:\end{defn}
\begin{itemize}
\item [(s1)]$S\left(\alpha,\beta;\gamma,\delta\right)\Rightarrow S\left(\beta,\alpha;\gamma,\delta\right)\land S\left(\gamma,\delta;\alpha,\beta\right)$;
\item [(s2)]$S\left(\alpha,\beta;\gamma,\delta\right)\land S\left(\alpha,\gamma;\beta,\delta\right)\Leftrightarrow\beta=\gamma\vee\alpha=\delta$;
\item [(s3)]$S\left(\alpha,\beta;\gamma,\delta\right)\Rightarrow S\left(\alpha,\beta;\gamma,\varepsilon\right)\vee S\left(\alpha,\beta;\delta,\varepsilon\right)$;
\item [(s4)]$S\left(\alpha,\beta;\gamma,\delta\right)\vee S\left(\alpha,\gamma;\delta,\beta\right)\vee S\left(\alpha,\delta;\beta,\gamma\right)$.
\end{itemize}
The idea is that there is, in the background, a circle $C$, and $S\left(\alpha,\beta,\gamma,\delta\right)$
holds iff $\gamma,\delta$ are on different ``components'' of $C\backslash\left\{ \alpha,\beta\right\} $.
See Figure \ref{fig:A-cyclic-separation}. 

\begin{figure}[h]
\includegraphics{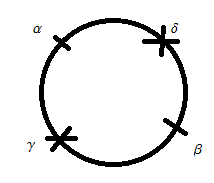}\protect\caption{\label{fig:A-cyclic-separation}A cyclic separation relation}
\end{figure}

\begin{lem}
Suppose that $S$ is a cyclic separation relation on a set $X$. Then
the following holds for all distinct $\alpha,\beta,\gamma,\delta,\delta'$
in $X$:\end{lem}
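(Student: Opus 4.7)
Given the quintuple $\alpha,\beta,\gamma,\delta,\delta'$ and the axiomatic setup, the likely content of the lemma is that the binary relation $\gamma \sim_{\alpha\beta} \delta \Leftrightarrow \neg S(\alpha,\beta;\gamma,\delta)$ on $X\setminus\{\alpha,\beta\}$ is transitive and admits at most two classes. These are exactly the pieces needed later to argue that no cyclic separation on $\Omega$ can be preserved by $\AGL n$, so I would organise the proof as two clauses.

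For the transitivity, $\neg S(\alpha,\beta;\gamma,\delta) \wedge \neg S(\alpha,\beta;\delta,\delta') \Rightarrow \neg S(\alpha,\beta;\gamma,\delta')$, the argument is essentially immediate and will be my warm-up. I would argue by contraposition: assume $S(\alpha,\beta;\gamma,\delta')$ and apply $(s3)$ with extension variable $\varepsilon := \delta$ to obtain $S(\alpha,\beta;\gamma,\delta) \vee S(\alpha,\beta;\delta',\delta)$. By $(s1)$ the second disjunct equals $S(\alpha,\beta;\delta,\delta')$, so either alternative contradicts one of the two hypothesised non-separations.

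For the dichotomy clause $S(\alpha,\beta;\gamma,\delta) \wedge S(\alpha,\beta;\gamma,\delta') \Rightarrow \neg S(\alpha,\beta;\delta,\delta')$ --- equivalently, no three elements are mutually $\sim_{\alpha\beta}$-inequivalent --- I would combine $(s2)$ and $(s4)$ applied to the $4$-tuple $(\alpha,\gamma,\delta,\delta')$: together they force exactly one of the three separations pivoted at $\alpha$ among $\{\gamma,\delta,\delta'\}$ to hold. I would then transport each of those three alternatives back to the pivot pair $(\alpha,\beta)$ by further applications of $(s3)$ and $(s1)$, using the two given separations $S(\alpha,\beta;\gamma,\delta)$ and $S(\alpha,\beta;\gamma,\delta')$ to force a contradiction in each case.

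The main obstacle I anticipate is exactly this transport step. Axioms $(s2)$ and $(s4)$ package information about separations at several different pivot pairs, and passing from a separation pivoted at $(\alpha,\gamma)$ to one pivoted at $(\alpha,\beta)$ without acquiring spurious disjuncts requires a careful choice of the fifth-point witness in each application of $(s3)$. The transitivity established in the first step should be the crucial tool that lets these chains collapse cleanly, since ``same-side'' relations can then be composed before being fed back into a fresh instance of $(s3)$.
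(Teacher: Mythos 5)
You were not shown the lemma's actual conclusion, which is the clause (s5): for distinct $\alpha,\beta,\gamma,\delta,\delta'$, $S\left(\alpha,\beta;\gamma,\delta\right)\land S\left(\alpha,\beta;\gamma,\delta'\right)\Rightarrow S\left(\alpha,\delta;\beta,\delta'\right)\vee S\left(\alpha,\delta';\beta,\delta\right)$. Your guessed ``dichotomy'' clause, namely $S\left(\alpha,\beta;\gamma,\delta\right)\land S\left(\alpha,\beta;\gamma,\delta'\right)\Rightarrow\neg S\left(\alpha,\beta;\delta,\delta'\right)$, is equivalent to (s5) for distinct points: one direction is a single application of (s4) to $\left(\alpha,\beta,\delta,\delta'\right)$ together with (s1), and the converse follows from (s2). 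So you are aiming at the right statement, and your transitivity clause (whose two-line proof via (s3) with $\varepsilon=\delta$ is correct) is a true but superfluous bonus.

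Your route to the dichotomy clause is genuinely different from the paper's. The paper first upgrades (s3) to an exclusive disjunction (s3') --- if $S\left(\alpha,\beta;\gamma,\delta\right)$ then \emph{exactly} one of $S\left(\alpha,\beta;\gamma,\varepsilon\right)$, $S\left(\alpha,\beta;\delta,\varepsilon\right)$ holds --- by a fairly delicate chain of applications of (s1)--(s3), and then (s5) falls out by applying (s3') to a hypothetical $S\left(\alpha,\beta;\delta,\delta'\right)$ with $\varepsilon=\gamma$. Your plan instead splits into the three cases supplied by (s4) applied to $\left(\alpha,\gamma,\delta,\delta'\right)$, and this does work --- but the ``transport step'' you flag as the main obstacle is exactly where your submission stops, so as written it is a plan rather than a proof. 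For the record, the step is uniform and short: in the case $S\left(\alpha,\gamma;\delta,\delta'\right)$, apply (s3) with witness $\beta$ and note that (s2) rules out $S\left(\alpha,\gamma;\delta,\beta\right)$ (against $S\left(\alpha,\beta;\gamma,\delta\right)$) and $S\left(\alpha,\gamma;\delta',\beta\right)$ (against $S\left(\alpha,\beta;\gamma,\delta'\right)$), so this case is outright impossible; in the case $S\left(\alpha,\delta;\gamma,\delta'\right)$, assume $S\left(\alpha,\beta;\delta,\delta'\right)$ for contradiction, note that (s2) rules out $S\left(\alpha,\delta;\delta',\beta\right)$, so (s3) with witness $\beta$ forces $S\left(\alpha,\delta;\beta,\gamma\right)$, which contradicts $S\left(\alpha,\beta;\gamma,\delta\right)$ by (s2); the third case is symmetric in $\delta,\delta'$. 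Note that what closes each case is (s2), not the transitivity from your first clause, which plays no role here; with the three cases written out, your argument is complete and arguably shorter than the paper's.
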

\begin{itemize}
\item [(s5)]$S\left(\alpha,\beta;\gamma,\delta\right)\land S\left(\alpha,\beta;\gamma,\delta'\right)\Rightarrow S\left(\alpha,\delta;\beta,\delta'\right)\vee S\left(\alpha,\delta';\beta,\delta\right)$.\end{itemize}
\begin{proof}
One easily sees that this property holds by observing Figure \ref{fig:A-cyclic-separation}.
However, we will give a formal proof. We first claim that a variant
of (s3) holds:
\begin{itemize}
\item [(s3')]For all distinct $\alpha,\beta,\gamma,\delta,\varepsilon\in X$,
if $S\left(\alpha,\beta;\gamma,\delta\right)$ holds then exactly
one of $S\left(\alpha,\beta;\gamma,\varepsilon\right)$ or $S\left(\alpha,\beta;\delta,\varepsilon\right)$
holds. 
\end{itemize}
Indeed, suppose for contradiction that both $S\left(\alpha,\beta;\gamma,\varepsilon\right)$
and $S\left(\alpha,\beta;\delta,\varepsilon\right)$ hold. By (s3)
and (s1) applied to $S\left(\alpha,\beta;\gamma,\varepsilon\right)$,
either $S\left(\delta,\beta;\gamma,\varepsilon\right)$ or $S\left(\alpha,\delta;\gamma,\varepsilon\right)$
holds. Suppose the former occurs. Then it cannot be that $S\left(\gamma,\beta;\varepsilon,\delta\right)$
(else $S\left(\beta,\delta;\gamma,\varepsilon\right)$ and $S\left(\beta,\gamma;\delta,\varepsilon\right)$
by (s1), contradicting (s2)). So applying (s3) (and (s1)) to $S\left(\alpha,\beta;\delta,\varepsilon\right)$
we get $S\left(\alpha,\gamma;\delta,\varepsilon\right)$. Since $S\left(\alpha,\gamma;\beta,\delta\right)$
is impossible by (s2), by applying (s3) to $S\left(\alpha,\gamma;\varepsilon,\delta\right)$
we get $S\left(\alpha,\gamma;\beta,\varepsilon\right)$, which contradicts
$S\left(\alpha,\beta;\gamma,\varepsilon\right)$ by (s2). The other
possibility is that $S\left(\alpha,\delta;\gamma,\varepsilon\right)$
holds, and it leads to a similar contradiction, by replacing $\alpha$
and $\beta$ in the argument. 

Now suppose $\alpha,\beta,\gamma,\delta,\delta'\in X$, and that $S\left(\alpha,\beta;\gamma,\delta\right)$
and $S\left(\alpha,\beta;\gamma,\delta'\right)$ hold. If the conclusion
does not hold, then by (s4), $S\left(\alpha,\beta;\delta,\delta'\right)$
holds. But then this contradicts (s3'), since now we can replace both
$\delta$ and $\delta'$ with $\gamma$. \end{proof}
\begin{prop}
\label{prop:No S relation} $\AGL n$ does not preserve a cyclic separation
relation on $\Omega$. \end{prop}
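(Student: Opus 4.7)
The plan is to assume for contradiction that $\AGL n$ preserves a cyclic separation relation $S$ on $\Omega$, and to produce a single affine map $\sigma$ whose action cyclically permutes the three possible cyclic separations of some fixed $4$-tuple, contradicting axioms (s2) and (s4).

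For the $4$-tuple I would take $\{A, B, C, D\}$, where $A, B, C$ are any three non-collinear points (e.g.\ $0, e_1, e_2$, which exist since $n \geq 2$) and $D = \frac{1}{3}(A + B + C)$ is their centroid. Since $A, B, C$ are non-collinear, $D$ is distinct from each of $A, B, C$ and no three of the four points are collinear. By Lemma \ref{lem:orbits}, $\AGL n$ acts transitively on ordered triples of non-collinear points, so there exists $\sigma \in \AGL n$ with $\sigma(A) = B$, $\sigma(B) = C$, and $\sigma(C) = A$; and since affine maps preserve centroids, $\sigma(D) = D$ automatically.

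Next I would compute the action of $\sigma$ on the three separations $S(A, B; C, D)$, $S(A, C; B, D)$, $S(A, D; B, C)$. A short calculation using (s1) --- which permits swapping each pair internally and swapping the two pairs --- shows that $\sigma$ induces a $3$-cycle on this set; for instance $S(A, B; C, D) \mapsto S(B, C; A, D) = S(A, D; B, C)$, and continuing in this way one returns to $S(A, B; C, D)$ after three steps. Since $S$ is $\sigma$-invariant, all three separations must have the same truth value.

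On the other hand, (s4) forces at least one of the three separations to hold, while three applications of (s2) (to the orderings $(A, B, C, D)$, $(A, B, D, C)$, $(A, C, D, B)$) rule out any pair of them holding simultaneously. Hence exactly one holds, contradicting the previous paragraph. The only step that requires real care is verifying that $\sigma$ genuinely induces a $3$-cycle rather than, say, a transposition on the three separations; but this is routine bookkeeping with (s1). The main conceptual idea is the centroid trick: to get a non-trivial cyclic symmetry of a $4$-tuple inside $\AGL n$, use three non-collinear points plus their (affinely invariant) centroid, which is automatically fixed.
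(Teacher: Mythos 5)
Your proof is correct, and it takes a genuinely different route from the paper's. The paper works with three collinear points $a,b,c$ on a line $L$ and a point $d$ off $L$: it first uses the fact that $\Omega\backslash L$ is a Jordan set together with a derived axiom (s5) to show that $S\left(a,d;b,d'\right)$ or $S\left(a,d';b,d\right)$ holds for all $d,d'\notin L$, and then applies a reflection in $\GL n$ swapping $e_{2}$ and $-e_{2}$ to contradict (s2). Your argument instead exploits a cyclic symmetry: the $4$-tuple consisting of a non-degenerate triangle $A,B,C$ and its centroid $D$ admits an affine map inducing a $3$-cycle on $\left\{ A,B,C\right\} $ and fixing $D$ (existence via the single orbit of non-collinear triples in Lemma \ref{lem:orbits}), and this map cyclically permutes the three pair-partitions of the $4$-set; invariance of $S$ then forces all three separations to have the same truth value, while (s4) forces at least one to hold and (s2) at most one. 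Your route is more elementary and self-contained --- it avoids the Jordan-set machinery and the auxiliary lemma establishing (s5) entirely, and it isolates a clean general principle: no group containing a permutation that acts as a $3$-cycle on the three pair-partitions of four distinct points can preserve a cyclic separation relation. What the paper's approach buys is uniformity with the rest of the argument (the same Jordan-set-plus-reflection template is reused almost verbatim for the $D$-relation in Proposition \ref{prop:noD}), whereas your centroid trick is specific to the separation relation; both adapt without difficulty to the projective case treated in Theorem \ref{thm:main proj}.
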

\begin{proof}
Suppose it does. 

Let $L$ be a line, and choose 3 points on it $a,b,c\in L$. Let $d$
be any point which is not on $L$. By (s4), we may assume that $S\left(a,b;c,d\right)$
holds. Since $\Omega\backslash L$ is a Jordan set, the same is true
for any $d'\notin L$. Finally, by (s5) we get that $S\left(a,d;b,d'\right)$
or $S\left(a,d';b,d\right)$ holds for any $d,d'\notin L$. By translation,
we may assume that $0\in L$, and that $b=-a$. By applying $GL_{n}\left(\Qq\right)$,
we may assume that $a=e_{1}$, where $\set{e_{i}}{i<n}$ form the
standard basis for $\Omega$. Now choose $d=e_{2}$, $d'=-e_{2}$. 

So we have $S\left(e_{1},e_{2};-e_{1},-e_{2}\right)$ or $S\left(e_{1},-e_{2};-e_{1},e_{2}\right)$.
There is some $\sigma\in\GL n$ that maps $e_{2}$ to $-e_{2}$ fixing
$e_{1}$. Then $\sigma\left(-e_{2}\right)=e_{2}$. So in any case
we contradict (s2) by applying $\sigma$. 
\end{proof}

\subsection{\label{sub:A-D-relation}A $D$-relation}
\begin{defn}
A quaternary relation $D$ defined on a set $X$ is a\emph{ $D$-relation}
if it satisfies the following conditions for all $\alpha,\beta,\gamma,\delta,\varepsilon\in X$:
\begin{itemize}
\item [(d1)]$D\left(\alpha,\beta;\gamma,\delta\right)\Rightarrow D\left(\beta,\alpha;\gamma,\delta\right)\wedge D\left(\gamma,\delta;\alpha,\beta\right)$;
\item [(d2)]$D\left(\alpha,\beta;\gamma,\delta\right)\Rightarrow\neg D\left(\alpha,\gamma;\beta,\delta\right)$;
\item [(d3)]$D\left(\alpha,\beta;\gamma,\delta\right)\Rightarrow D\left(\varepsilon,\beta;\gamma,\delta\right)\vee D\left(\alpha,\beta;\gamma,\varepsilon\right)$;
\item [(d4)]$\left(\alpha\neq\gamma\land\beta\neq\gamma\right)\Rightarrow D\left(\alpha,\beta;\gamma,\gamma\right)$;
\item [(d5)]$\alpha,\beta,\gamma$ distinct $\Rightarrow\exists\delta\left(\gamma\neq\delta\land D\left(\alpha,\beta;\gamma,\delta\right)\right)$. 
\end{itemize}
\end{defn}
The idea behind this relation is that there is some tree in the background
and for $\alpha,\beta,\gamma,\delta$ points in the tree, $D(\alpha,\beta;\gamma,\delta)$
holds if the shortest path between $\alpha$ and $\beta$ does not
intersect the shortest path between $\gamma$ and $\delta$. See
Figure \ref{fig:D-Relation}.

\begin{figure}[h]
\includegraphics{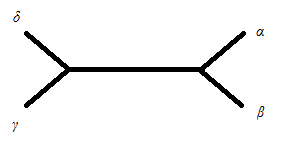} \protect\caption{\label{fig:D-Relation}$D$ relation}
\end{figure}

As in Section \ref{sub:A-cyclic-separation}, we have:
\begin{lem}
Suppose that $D$ is a $D$-relation on a set $X$. Then the following
axiom holds:
\begin{itemize}
\item [\emph{(d6)}]If $D\left(\alpha,\beta;\gamma,\delta\right)$ and $D\left(\alpha,\beta;\gamma,\delta'\right)$
then $D\left(\alpha,\beta;\delta,\delta'\right)$.
\end{itemize}
\end{lem}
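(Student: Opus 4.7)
The plan is to assume both hypotheses $D(\alpha,\beta;\gamma,\delta)$ and $D(\alpha,\beta;\gamma,\delta')$ hold, assume (d6) fails, and derive a contradiction by successively applying (d3) in such a way that the unwanted disjuncts are ruled out via (d2). The guiding picture is that on any four distinct elements, (d1) shows that each $D$-instance picks out one of the three pairings of those elements into two pairs, while (d2) (combined with (d1)) says that no two of the three pairings can hold simultaneously. I will try to force the existence of two distinct pairings on the set $\{\beta,\gamma,\delta,\delta'\}$ and read off a contradiction.

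First I would rewrite each hypothesis using (d1) as $D(\gamma,\delta;\alpha,\beta)$ and $D(\gamma,\delta';\alpha,\beta)$, and apply (d3) to each with $\varepsilon=\delta'$ (respectively $\varepsilon=\delta$). One disjunct is in each case $D(\delta',\delta;\alpha,\beta)$ or $D(\delta,\delta';\alpha,\beta)$, which after (d1) is exactly the desired conclusion $D(\alpha,\beta;\delta,\delta')$. So I may assume the remaining disjunct in both cases, namely $D(\gamma,\delta;\alpha,\delta')$ and $D(\gamma,\delta';\alpha,\delta)$, or equivalently $D(\alpha,\delta';\gamma,\delta)$ and $D(\alpha,\delta;\gamma,\delta')$.

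Next I would apply (d3) once more to $D(\alpha,\delta';\gamma,\delta)$ with $\varepsilon=\beta$, giving either $D(\beta,\delta';\gamma,\delta)$ or $D(\alpha,\delta';\gamma,\beta)$. The second option, rewritten via (d1), asserts the pairing $\{\{\alpha,\delta'\},\{\beta,\gamma\}\}$ on the set $\{\alpha,\beta,\gamma,\delta'\}$, which by (d2) is incompatible with the hypothesis $D(\alpha,\beta;\gamma,\delta')$, corresponding to the pairing $\{\{\alpha,\beta\},\{\gamma,\delta'\}\}$. Hence $D(\beta,\delta';\gamma,\delta)$ must hold. An entirely analogous application of (d3) to $D(\alpha,\delta;\gamma,\delta')$ with $\varepsilon=\beta$, this time using the hypothesis $D(\alpha,\beta;\gamma,\delta)$ to rule out the unwanted branch, yields $D(\beta,\delta;\gamma,\delta')$.

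These two conclusions correspond respectively to the pairings $\{\{\gamma,\delta\},\{\beta,\delta'\}\}$ and $\{\{\gamma,\delta'\},\{\beta,\delta\}\}$ on the four element set $\{\beta,\gamma,\delta,\delta'\}$; since these are distinct pairings, (d2) (combined with (d1)) gives the sought contradiction. The main obstacle I anticipate is purely combinatorial bookkeeping: each $D$-instance admits eight equivalent rewritings under (d1), and one must recognise at each step which pairing of the relevant four elements is being asserted in order to apply (d2) correctly. A minor subsidiary issue is the possibility that some of $\alpha,\beta,\gamma,\delta,\delta'$ might coincide, which can be dispatched using the degenerate clause (d4).
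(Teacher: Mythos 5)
Your argument is correct and follows essentially the same route as the paper's: apply (d3), after normalizing with (d1), to each hypothesis with the other of $\delta,\delta'$ in the role of $\varepsilon$, note that one disjunct of each application is the desired conclusion, and derive a contradiction from the two remaining disjuncts via (d1) and (d2). The only real difference is a redundancy: your intermediate pair $D\left(\alpha,\delta';\gamma,\delta\right)$ and $D\left(\alpha,\delta;\gamma,\delta'\right)$ already consists of two distinct pairings of the four elements $\alpha,\gamma,\delta,\delta'$ and hence already contradicts (d1) and (d2), so your second round of (d3) applications is superfluous.
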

\begin{proof}
Use (d3) and (d1) to try and replace $\gamma$ by $\delta'$ in $D\left(\alpha,\beta;\gamma,\delta\right)$
and then to try and replace $\gamma$ by $\delta$ in $D\left(\alpha,\beta;\gamma,\delta'\right)$.
If both fail, then it must be that $D\left(\delta',\beta;\gamma,\delta\right)$
and $D\left(\delta,\beta;\gamma,\delta'\right)$ which together contradict
(d2) and (d1). \end{proof}
\begin{prop}
\label{prop:noD}$\AGL n$ does not preserve a $D$-relation on $\Omega$. \end{prop}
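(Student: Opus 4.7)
The plan is to derive a contradiction by producing four points $p,q,r,s\in\Omega$ in general position for which both $D(p,q;r,s)$ and $D(p,r;q,s)$ hold, in violation of (d2). The heart of the argument is the intermediate claim that for every $a\neq b\in\Omega$ and every pair of distinct $\delta_{1},\delta_{2}\in\Omega\setminus L(a,b)$ (where $L(a,b)$ denotes the line through $a$ and $b$), $D(a,b;\delta_{1},\delta_{2})$ holds.

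To begin, I would produce a single witness of the form $D(a,b;c,\delta)$ with $a,b,c$ collinear on some line $L$ and $\delta\notin L$. Pick any three distinct collinear points $a,b,c$ on a line $L$; axiom (d5) supplies $\delta\neq c$ with $D(a,b;c,\delta)$, and a small check using (d1), (d2), (d4) forces $\delta\notin\{a,b\}$ as well. If $\delta\notin L$ we are done. If $\delta\in L$, pick $\varepsilon\in\Omega\setminus L$ and apply (d3) to $D(a,b;c,\delta)$: it yields $D(\varepsilon,b;c,\delta)\vee D(a,b;c,\varepsilon)$. The right disjunct is immediately the desired witness; in the left disjunct, (d1) rewrites $D(\varepsilon,b;c,\delta)$ as $D(c,\delta;b,\varepsilon)$, which is again a witness, now with $c,\delta,b\in L$ collinear and $\varepsilon\notin L$.

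Given such a witness $D(a,b;c,\delta)$, the pointwise stabilizer of $L$ in $\AGL n$ fixes $a,b,c$ and acts transitively on $\Omega\setminus L$ (the Jordan property of affine subspaces under $\AGL n$), so $D(a,b;c,\delta')$ holds for every $\delta'\in\Omega\setminus L$. Applying (d6) to two such instances yields $D(a,b;\delta_{1},\delta_{2})$ for every pair of distinct $\delta_{1},\delta_{2}\in\Omega\setminus L$. Because this conclusion depends only on the pair $(a,b)$, $2$-transitivity of $\AGL n$ together with invariance of $D$ upgrades it to the full intermediate claim above.

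To finish, using $n\geq2$, choose four points $p,q,r,s\in\Omega$ in general position (no three collinear). Applied to the pair $(p,q)$ the intermediate claim gives $D(p,q;r,s)$, and applied to $(p,r)$ it gives $D(p,r;q,s)$; these together contradict (d2). The main obstacle is the first step: (d5) gives no control over whether the new point lies on or off $L$, so we must exploit (d3) together with the symmetry (d1) to re-route through a different collinear triple in the bad case.
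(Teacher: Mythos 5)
Your proof is correct and follows essentially the same route as the paper's: both establish the key claim that $D\left(a,b;\delta_{1},\delta_{2}\right)$ holds whenever $\delta_{1},\delta_{2}$ lie off the line through $a,b$ (via (d5), (d3)/(d1) to push the fourth point off the line, the Jordan property of $\Omega\backslash L$, (d6), and 2-transitivity), and then contradict (d2) on a quadruple in general position. The only cosmetic difference is the endgame: the paper applies a linear map swapping $e_{2}$ and $-e_{2}$ to the configuration $-e_{1},-e_{2},e_{1},e_{2}$, whereas you invoke the intermediate claim twice directly, which is equally valid.
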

\begin{proof}
Suppose it does. 

Let $L$ be a line in $\Omega$. Let $a,b,c\in L$ be distinct. By
(d5) for some $d\neq c$, $D\left(a,b;c,d\right)$. Note that by (d2)
and (d1), it must be that $d\neq a,b$. If $d\in L$, then by (d3),
we can replace either $d$ or $a$ by some $d'\notin L$. Using (d1)
in the latter case, we may then assume that $D(a,b;c,d)$ holds with
$d\notin L$. Since $\Omega\backslash L$ is a Jordan set, by (d6)
we have that $D\left(a,b;d,d'\right)$ for any $d,d'\notin L$. 

By applying $\AGL n$, it follows that for any $a,b,d$ and $d'$,
if $d$ and $d'$ are not on the line determined by $a,b$ then $D\left(a,b;d,d'\right)$.

So, letting $\set{e_{i}}{i<n}$ be the standard basis of $\Omega$,
let $a=-e_{1}$, $b=-e_{2}$, $d=e_{1}$ and $d'=e_{2}$. But then
let $\sigma\in\GL n$ replace $e_{2}$ by $-e_{2}$ while fixing $e_{1}$.
Then by applying $\sigma$ it follows that $D\left(-e_{1},-e_{2};e_{1},e_{2}\right)$
and $D\left(-e_{1},e_{2};e_{1},-e_{2}\right)$. By (d1) and (d2) this
is a contradiction. 
\end{proof}

\subsection{Steiner systems}
\begin{defn}
\label{def:steiner}Let $k\in\Nn$ be such that $k\geq2$. A \emph{Steiner
$k$-system} $\left(X,\calB\right)$ consists of a set $X$ of \emph{points}
and set $\calB$ of \emph{blocks} where $\calB\subseteq\SS\left(X\right)$,
for all $\bb_{1},\bb_{2}\in\calB$, $\left|\bb_{1}\right|=\left|\bb_{2}\right|>k$
and:
\begin{enumerate}
\item There is more than one block.
\item If $\alpha_{1},\ldots,\alpha_{k}$ are distinct points in $X$, then
there is a unique block $\bb\in\calB$ containing them. 
\end{enumerate}
\end{defn}
\begin{example}
Let $X=\Omega$ and $\calB$ be the set of lines in $X$. Then $\left(X,\calB\right)$
forms a 2-Steiner system. So $\AGL n$ preserves a 2-Steiner system
on $\Omega$.\end{example}
\begin{rem}
If $\left(X,\calB\right)$ is a $k$-Steiner system, then for any
permutation $\sigma$ of $X$, $\sigma$ preserves $\calB$ iff $\sigma$
preserves the $k+1$-ary relation $R$ defined as $R\left(x_{1},\ldots,x_{k+1}\right)$
iff $x_{1},\ldots,x_{k+1}$ lie in the same block.\end{rem}
\begin{lem}
\label{lem:block are contained in lines}If $\left(\Omega,\calB\right)$
is a $k$-Steiner system that $\AGL n$ preserves and $a_{1},\ldots,a_{k}\in\Omega$
are distinct points contained in some affine subspace $A$ of dimension
$<n$, then the block $\bb\in\calB$ they determine is contained in
$A$.\end{lem}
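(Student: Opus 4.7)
The plan is to argue by contradiction: assume the block $\bb$ determined by $a_{1},\ldots,a_{k}$ contains some point $b\notin A$. The key idea is to iterate the Jordan-set property for $<n$-dimensional affine subspaces twice, first with $A$ itself and then with an auxiliary line disjoint from $A$, so as to force $\bb$ to equal all of $\Omega$, which contradicts clause~(1) of Definition~\ref{def:steiner}.

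First I would invoke the Jordan-set observation recorded just before Fact~\ref{fac:Macpharson-and-Adeleke}: the pointwise stabilizer $G_{A}\leq\AGL n$ of $A$ acts transitively on $\Omega\setminus A$. Every $\sigma\in G_{A}$ fixes $a_{1},\ldots,a_{k}$ pointwise, so $\sigma(\bb)$ is still a block through $a_{1},\ldots,a_{k}$ and must equal $\bb$ by the uniqueness clause of the Steiner system; hence $\sigma(b)\in\bb$ for every such $\sigma$, and transitivity yields $\Omega\setminus A\subseteq\bb$.

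Next I would pick a line $L$ disjoint from $A$: since the $a_{i}$'s are distinct we have $\dim A\geq 1$, and since $\dim A<n$ there exists some $v\in\Omega\setminus A$, so $L:=v+\Qq(a_{2}-a_{1})$ is a $1$-dimensional affine subspace parallel to $A$ but disjoint from it. Choose $k$ distinct points $c_{1},\ldots,c_{k}\in L$; they lie in $\Omega\setminus A\subseteq\bb$, so the unique block through them is $\bb$ itself. Re-running the Jordan argument with $(L,c_{1},\ldots,c_{k})$ in place of $(A,a_{1},\ldots,a_{k})$ yields the dichotomy $\bb\subseteq L$ or $\bb\supseteq\Omega\setminus L$; the former is impossible since $a_{1}\in\bb\setminus L$, so $\bb\supseteq\Omega\setminus L$.

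Combining the two inclusions, $\bb\supseteq(\Omega\setminus A)\cup(\Omega\setminus L)=\Omega\setminus(A\cap L)=\Omega$, so $\bb=\Omega$. But then any other block $\bb'\in\calB$ has the same (infinite) cardinality as $\bb$ and is contained in $\Omega=\bb$, so $\bb'$ shares more than $k-1$ points with $\bb$ and by uniqueness must equal $\bb$, contradicting the requirement that $\calB$ contain more than one block. I don't foresee a serious obstacle; the only point requiring a moment of care is the existence of the auxiliary disjoint line $L$, which relies on $\dim A\geq 1$ (guaranteed by $k\geq 2$ and the $a_{i}$'s being distinct) together with the hypothesis $\dim A<n$.
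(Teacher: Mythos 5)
Your proof is correct and follows essentially the same route as the paper's: use the Jordan-set property of $\Omega\setminus A$ to get $\Omega\setminus A\subseteq\bb$, then repeat with a line disjoint from $A$ to conclude $\bb=\Omega$, contradicting clause (1) of Definition \ref{def:steiner}. The only difference is that you explicitly construct the auxiliary disjoint line and spell out the stabilizer argument, both of which the paper leaves implicit.
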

\begin{proof}
Suppose that for some $y\notin A$, $y\in\bb$. Then since $\Omega\backslash A$
is a Jordan set, it follows by (2) in Definition \ref{def:steiner}
that $\bb$ contains $\Omega\backslash A$. Let $L$ be a line disjoint
from $A$ in $\Omega$. So $\bb$ contains $L$ (in particular, $k$
points from $L$), but also some points outside of $L$. By the same
argument, $\bb$ contains $\Omega\backslash L$. Together $\bb=\Omega$
contradicting (1) in Definition \ref{def:steiner}. \end{proof}
\begin{lem}
\label{lem:If G prop. cont. AGL then k>2}If $G$ is any group of
permutations of $\Omega$ properly containing $\AGL n$ which preserves
a $k$-Steiner system on $\Omega$, then $k>2$.\end{lem}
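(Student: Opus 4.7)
The plan is to argue by contradiction, using only two ingredients: the previous lemma (blocks sit inside affine subspaces containing their defining points) and the fact (Corollary \ref{cor:3-trans}) that any group strictly containing $\AGL n$ must be 3-transitive.

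Suppose $k=2$. Since $\AGL n\leq G$, the subgroup $\AGL n$ also preserves $(\Omega,\calB)$. Given any two distinct points $a_{1},a_{2}\in\Omega$, they lie in the affine line $L(a_{1},a_{2})$, which is a 1-dimensional affine subspace; as $n\geq 2$, this is a proper affine subspace of $\Omega$. Lemma \ref{lem:block are contained in lines} (applied to $\AGL n$) then forces the unique block through $a_{1}$ and $a_{2}$ to be contained in $L(a_{1},a_{2})$. Hence \emph{every} block of $\calB$ is contained in some affine line of $\Omega$.

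By Definition \ref{def:steiner}, every block has cardinality strictly greater than $k=2$, so any block $\bb\in\calB$ contains at least three distinct points $a,b,c$, which by the previous paragraph are collinear. By Corollary \ref{cor:3-trans}, $G$ is 3-transitive, so there is some $\sigma\in G$ with $\sigma(a),\sigma(b),\sigma(c)$ pairwise distinct and \emph{not} collinear (3-transitivity gives us complete freedom to choose the image triple). Since $G$ preserves $\calB$, the set $\sigma(\bb)$ is again a block. But $\sigma(\bb)$ contains the three non-collinear points $\sigma(a),\sigma(b),\sigma(c)$, so it cannot be contained in any affine line; this contradicts the conclusion of the previous paragraph.

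There is no real obstacle here: the argument is essentially a rigidity observation. The only subtle point is being careful that Lemma \ref{lem:block are contained in lines} is available as stated for $\AGL n$ (which is automatic, since $\AGL n\leq G$ preserves $\calB$), and that the hypothesis $n\geq 2$ is precisely what makes a line a \emph{proper} affine subspace of $\Omega$ so that the Jordan-set argument from Lemma \ref{lem:block are contained in lines} applies.
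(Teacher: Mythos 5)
Your proof is correct and follows exactly the paper's argument: blocks are contained in lines by Lemma \ref{lem:block are contained in lines}, and 3-transitivity of $G$ (Corollary \ref{cor:3-trans}) sends three collinear points of a block to a non-collinear triple, a contradiction. The extra care you take (noting that a block has at least three points since $|\bb|>k$, and that $n\geq2$ makes a line a proper affine subspace) just makes explicit what the paper leaves implicit.
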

\begin{proof}
By Lemma \ref{lem:block are contained in lines}, if $k=2$, then
blocks are contained in lines. However, by Corollary \ref{cor:3-trans},
the action of $G$ on $\Omega$ is is 3-transitive, so it takes 3
points on the same block (so collinear) to 3 non-collinear points
(so not on the same block). Contradiction.\end{proof}
\begin{lem}
\label{lem:at least k+2}If $\left(\Omega,\calB\right)$ is a $k$-Steiner
system that $\AGL n$ preserves then all blocks $\bb\in\calB$ have
size at least $k+2$.
\end{lem}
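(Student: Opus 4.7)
My plan is to suppose, for contradiction, that every block has size exactly $k+1$ (the minimum allowed by Definition \ref{def:steiner}) and derive a contradiction from the rigidity of the $\AGL n$-action. The first move is a reduction to dimension one: fix any line $L \subseteq \Omega$. Since $L$ has affine dimension $1 < n$, Lemma \ref{lem:block are contained in lines} forces every block containing $k$ points of $L$ to lie entirely in $L$, so the blocks contained in $L$ form a $k$-Steiner system on $L \cong \Qq$ with blocks of size $k+1$, preserved by the $\AGL 1$-action that $\AGL n$ induces on $L$. Equivalently, this data is encoded by a symmetric, $\AGL 1$-equivariant ``extra-point'' function $\phi$ sending each $k$-subset $\{a_1, \ldots, a_k\}$ of $\Qq$ to the unique additional point of its block, subject to the Steiner consistency relation $\phi(\{a_1, \ldots, \widehat{a_i}, \ldots, a_k, b\}) = a_i$ whenever $\phi(\{a_1, \ldots, a_k\}) = b$.

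The contradiction is then extracted by exploiting symmetry of $\phi$, equivariance under the involution $x \mapsto 1 - x$ (which swaps $0$ and $1$), equivariance under dilations, and Steiner consistency. For $k = 2$: symmetry together with $x \mapsto 1 - x$ forces $\phi(\{0, 1\}) = 1/2$; equivariance under $x \mapsto x/2$ then gives $\phi(\{0, 1/2\}) = 1/4$, but Steiner consistency applied to the block $\{0, 1, 1/2\}$ demands $\phi(\{0, 1/2\}) = 1$. For $k = 3$, writing $f(r) = \phi(\{0, 1, r\})$, $x \mapsto 1 - x$-equivariance combined with symmetry yields $f(1 - r) = 1 - f(r)$, and setting $r = 1/2$ gives $f(1/2) = 1/2$, contradicting the fact that the extra point must differ from $r$. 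For $k \geq 4$ the same principle applies: choose an input configuration $\{0, 1, r_3, \ldots, r_k\}$ that is setwise invariant under $x \mapsto 1 - x$ (for odd $k$, take $r_3 = 1/2$ and pair the remaining $r_i$'s as $(r, 1 - r)$; for even $k$, pair all of them), which forces the extra point to equal $1/2$; for odd $k \geq 5$ this already contradicts distinctness of the $k+1$ block elements, and for even $k \geq 4$ a further step --- using Steiner consistency to express $\phi$ of a sub-configuration and then applying a dilation, in the spirit of the $k = 2$ argument --- produces two incompatible values of $\phi$ at the same input.

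The main obstacle I foresee is the case $k \geq 4$, where one has to juggle four constraints simultaneously (symmetry, $x \mapsto 1 - x$-equivariance, dilation equivariance, and Steiner consistency) to extract the contradiction. The uniform mechanism, however, is always the rigidity imposed by $\AGL 1$ being sharply $2$-transitive: once the input configuration has a useful $x \mapsto 1 - x$-symmetry the extra point is pinned down to $1/2$, and $\AGL 1$-equivariance then transports the relation to further configurations where it clashes either with distinctness or with a second, dilation-derived value.
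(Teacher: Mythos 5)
Your reduction to a $k$-Steiner system on a single line $L\cong\Qq$ with an $\AGL 1$-equivariant ``extra point'' function $\phi$ is legitimate (Lemma \ref{lem:block are contained in lines} guarantees that the block of $k$ collinear points stays on the line, and the setwise stabilizer of $L$ induces all of $\AGL 1$ on it), and your core mechanism is in fact the same one the paper uses: pick a $k$-set invariant under an involution of the line and exploit the fact that its block, hence the unique extra point, must also be invariant. Your odd case (the involution $x\mapsto 1-x$ has the single fixed point $1/2$, so an invariant set of odd size contains $1/2$, and the extra point is forced to equal $1/2$, violating distinctness) is a clean contrapositive of the paper's odd case, where the configuration contains the fixed point $0$ of $x\mapsto -x$ and the extra point $x\neq 0$ therefore drags $-x$ into the block as a $\left(k+2\right)$-nd element. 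Your $k=2$ and $k=3$ computations are correct.

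The one genuine gap is the even case $k=2l\geq4$. There you correctly pin the extra point of an invariant configuration to $1/2$, but the promised contradiction (``Steiner consistency on a sub-configuration plus a dilation'') is asserted rather than carried out, and it does not scale in an obvious way. For $k=4$ it does work: from $\phi\left(\left\{ 0,1,x,1-x\right\} \right)=1/2$ one gets $\phi\left(\left\{ 0,1/2,1,x\right\} \right)=1-x$ for all admissible $x$, hence $\phi\left(\left\{ 0,1/2,1,2\right\} \right)=-1$, and the dilation $y\mapsto y/2$ yields $\phi\left(\left\{ 0,1/4,1/2,1\right\} \right)=-1/2$ against the formula's value $3/4$. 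But for $k\geq6$ no dilation carries the ``core'' $\left\{ 0,1,1/2,r,1-r,\ldots\right\} $ to another member of the same family, so the two incompatible values never land on the same input without further work. The cheap fix, entirely inside your framework, is to switch involutions and use a translation instead of a dilation: the set $\left\{ \pm1,\ldots,\pm l\right\} $ is invariant under $x\mapsto-x$, so its extra point is the fixed point $0$ and its block is the integer interval $\left\{ -l,\ldots,l\right\} $; the translate by $1$ is also a block and shares $k$ points with it, hence equals it by uniqueness in Definition \ref{def:steiner}, yet omits $-l$ --- contradiction. This is precisely the paper's even case, which handles all even $k$ (including $k=2$) uniformly and directly, without the proof-by-contradiction setup; as written, your dilation route only establishes the lemma for odd $k$ and for $k\leq4$.
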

Note that by Definition \ref{def:steiner}, blocks must contain at
least $k+1$ points, but this lemma asks for one more. 
\begin{proof}
We divide the proof to two cases:
\begin{casenv}
\item $k=2l+1$ is odd.

Let $\bb\in\calB$ be the unique block containing 
\[
s=\left\{ -le_{1},-\left(l-1\right)e_{1},\ldots,-e_{1},0,e_{1},\ldots,\left(l-1\right)e_{1},le_{1}\right\} ,
\]
where as usual $\set{e_{i}}{i<n}$ is a standard basis for $\Omega$.
Then this block contains at least one more point $x$, which we already
know is on the line $L=\Qq e_{1}$ by Lemma \ref{lem:block are contained in lines}.
Then any map $\sigma\in\GL n$ taking $e_{1}$ to $-e_{1}$ will take
$x$ to $-x$ while fixing $s$, so $-x\in\sigma\left(\bb\right)$
but $\sigma\left(\bb\right)=\bb$ since they both contain $s$. So
$\bb$ contains at least $k+2$ points.

\item $k=2l$ is even.

Let $\bb\in\calB$ be the unique block containing 
\[
s'=\left\{ -le_{1},-\left(l-1\right)e_{1},\ldots,-e_{1},e_{1},\ldots,\left(l-1\right)e_{1},le_{1}\right\} .
\]
Again, $\bb$ contains at least one more point $x$ on this line.
If the argument for the odd case fails, it means that $x=0$. So now
we may assume that $\bb$ contains $s=s'\cup\left\{ 0\right\} $.
Let $\sigma\in\AGL n$ be the map $x\mapsto x+e_{1}$. Then $\left|\sigma\left(s\right)\cap s\right|\geq k$
(namely $\sigma\left(s\right)\cap s=\left\{ -\left(l-1\right)e_{1},\ldots,le_{1}\right\} $),
so $\sigma\left(\bb\right)=\bb$. But then $\bb$ contains $\left(l+1\right)e_{1}$
and we are done. 

\end{casenv}
\end{proof}
\begin{rem}
\label{rem:infinite sequence}Suppose that $L\subseteq\Omega$ is
a line, and that $\sigma\in\AGL n$ with $\sigma\left(L\right)=L$.
If for some $a\neq b,x\in L$, $\sigma\left(a\right)=b$ and $\sigma\left(x\right)=x$,
then unless $x=\left(a+b\right)/2$, $\sequence{\sigma^{m}\left(a\right)}{m<\omega}$
is without repetitions. 

Indeed, after conjugating by an element from $\AGL n$ we may assume
that $L=\Qq e_{1}$, and that $x=0$. Thus it is enough to note that
if $\tau\in\GL 1=\Qq^{\x}$, $r\in\Qq\backslash\left\{ 0\right\} ,$
then for all $m$, $\tau^{m}\left(r\right)=r$ iff $\tau^{m}=\id$
which implies that $\tau=\id$ or $\tau=-\id$.\end{rem}
\begin{prop}
\label{prop:AGL no Steiner}$\AGL n$ does not preserve a $k$-Steiner
system on $\Omega$ for $k>2$.\end{prop}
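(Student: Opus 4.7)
Assume for contradiction that $\AGL n$ preserves a $k$-Steiner system $(\Omega,\calB)$ with $k>2$. I would begin by invoking Lemmas \ref{lem:block are contained in lines} and \ref{lem:at least k+2} to fix the block $\bb$ on the line $L=\Qq e_{1}$ containing the centrally symmetric configuration $s$ from the proof of Lemma \ref{lem:at least k+2}. Identifying $L$ with $\Qq$, we have $\bb\subseteq L$, $|\bb|\geq k+2$, $\bb$ is closed under $a\mapsto -a$ (via the $\GL n$-element sending $e_{1}\mapsto -e_{1}$ and fixing the other basis vectors), and $\bb$ contains at least one extra pair $\{x,-x\}$ beyond the symmetric integer points in $s$.

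The driving principle is Steiner uniqueness: two blocks that share at least $k$ points must coincide. I would first establish that $\bb\setminus\{0\}$ is a multiplicative subgroup of $\Qq^{*}$. Indeed, for any $y\in\bb\setminus\{0\}$ the $\GL n$-element $\sigma_{y}$ scaling $L$ by $y$ (identity on the other basis vectors) sends $\bb$ to the block $y\bb$, and by the reflection symmetry $y\bb\cap\bb\supseteq\{0,y,-y\}\subseteq\bb$. For $k=3$ this already yields $k$ common points, hence $y\bb=\bb$; for $k>3$ one combines $\{0,\pm y\}$ with the other symmetric integers $\pm y,\pm 2y,\ldots,\pm l y$ lying in $\bb$ after suitable iterated scalings to produce the required $k$ common elements. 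Iterating, $H:=\bb\setminus\{0\}$ is a multiplicative subgroup of $\Qq^{*}$ containing $-1$ and $x$.

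Next, I would combine this multiplicative closure with translations to further tighten $\bb$. For example, the block $\bb+1$ (containing $\{0,1,2\}$) and the $\AGL n$-translate $2\bb+2$ (obtained from the map $a\mapsto 2a+2$, and hence the block of $\{0,2,4\}$) both contain $\{0,2,4\}$ once $3\in H$, so Steiner uniqueness forces $\bb+1=2\bb+2$, i.e., the functional equation $\bb=2\bb+1$. Bootstrapping with analogous comparisons across scaling--translation pairs yields progressively more integers in $\bb$; these enlarge $H$ through multiplicative closure, and the process eventually forces $H=\Qq^{*}$, hence $\bb=L$.

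Finally, once $\bb=L$, every line in $\Omega$ is a block. Consider the block $B'$ of a non-collinear triple $\{0,e_{1},e_{2}\}$; by Lemma \ref{lem:block are contained in lines}, no line contains $k$ points of $B'$. But the $\AGL n$-stabilizer of $B'$ must act transitively on non-collinear triples inside $B'$, so it is a rich group. Choosing $p\in B'$ with the second coordinate outside $\{0,\pm 1\}$ (which exists once $|B'|\geq k+2$) and considering the stabilizer element $g$ fixing $0,e_{1}$ and sending $e_{2}\mapsto p$, Remark \ref{rem:infinite sequence} produces an infinite $g$-orbit inside $B'$ lying on a single line through $(0,1)$, contradicting the bound on line-intersections with $B'$. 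The main obstacle is making the scaling-closure step rigorous uniformly in $k$ (only three common elements are immediate, the rest must be bootstrapped) and setting up the final planar contradiction, where the clean line-reflection symmetry of $\bb$ is replaced by a less canonical $S_{3}$-symmetry of $\{0,e_{1},e_{2}\}$.
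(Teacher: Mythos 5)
Your approach is genuinely different from the paper's, but it has gaps at each of its three stages that I do not see how to close as written. First, the multiplicative-closure step: for $y\in\bb\setminus\left\{ 0\right\} $ you only exhibit the three common points $0,y,-y$ of $\bb$ and $y\bb$, which forces $y\bb=\bb$ only when $k=3$; for $k>3$ your ``bootstrap'' is circular, since the extra common points $\pm2y,\ldots,\pm ly$ you want to use are exactly the memberships you are trying to establish. Second, even for $k=3$, the passage from ``$H=\bb\setminus\left\{ 0\right\} $ is a multiplicative subgroup containing $-1$ and some $x\neq\pm1$'' to $H=\Qq^{\times}$ is unsupported: your sample functional equation $\bb+1=2\bb+2$ is conditioned on $3\in H$, which nothing guarantees, and no mechanism is given that forces any new prime into $H$ (a priori $H$ could be something like $\left\{ \pm x^{m}\suchthat m\in\Zz\right\} $, and the translate $\bb+1$ then need not share $k$ points with any scaled copy of $\bb$). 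Third, the final planar contradiction is underpowered: for $k=3$ the bound $\left|B'\right|\geq k+2=5$ does not guarantee a point $p$ whose second coordinate avoids $\left\{ 0,\pm1\right\} $, since each of the three excluded horizontal lines may contain up to two points of $B'$; for $k>3$ an affine map fixing only $0,e_{1}$ and sending $e_{2}\mapsto p$ need not stabilize $B'$ at all (you need $k$ points of $B'$ mapped into $B'$); and Remark \ref{rem:infinite sequence} does not apply directly to your $g$, since $g$ need not fix setwise a line containing both $e_{2}$ and a fixed point --- the claim that the $g$-orbit of $e_{2}$ lies on one line requires a separate eigenvector computation.

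For contrast, the paper avoids all of this by never working with the block of a \emph{collinear} $k$-set. It takes the block $\bb$ of $\left\{ 0,e_{1},\ldots,\left(k-2\right)e_{1},e_{2}\right\} $, uses Lemma \ref{lem:at least k+2} to extract two further points $x,y$ lying in the plane but off the line $L'=\Qq e_{1}$ with $\left(x+y\right)/2\notin L'$, and then applies a single map $\sigma\in\AGL n$ fixing $L'$ pointwise with $\sigma\left(x\right)=y$: since $\sigma$ fixes $k-1$ points of $\bb$ and sends $x$ into $\bb$, it stabilizes $\bb$, and Remark \ref{rem:infinite sequence} (applied to the line through $x$ and $y$, which meets $L'$ in a fixed point, or to the parallel case) yields infinitely many points of $\bb$ on a line not containing $\bb$, contradicting Lemma \ref{lem:block are contained in lines}. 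If you want to salvage your route, the place to concentrate is the $H=\Qq^{\times}$ step, but I suspect the cleaner fix is simply to start from a non-collinear determining set as the paper does.
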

\begin{proof}
Suppose $\AGL n$ preserves the $k$-Steiner system $\left(\Omega,\calB\right)$
and $k>2$. Let $\bb$ be the block determined by $\set{me_{1}}{0\leq m<k-1}\cup\left\{ e_{2}\right\} $.
By Lemma \ref{lem:at least k+2}, this block contains at least two
more points, $x$ and $y$, and neither of them is in the line $L'=\Qq e_{1}$
(by Lemma \ref{lem:block are contained in lines}). By Lemma \ref{lem:block are contained in lines},
$x$ and $y$ are in the plane spanned by $\left\{ e_{1},e_{2}\right\} $.
Note that in general, for distinct $a,b,c\in\Omega$, if $\left(a+b\right)/2,\left(b+c\right)/2,\left(a+c\right)/2$
are collinear, then $a,b,c$ are collinear and contained in the same
line: this line contains $b$ as $b=\frac{a+b}{2}+\frac{b+c}{2}-\frac{a+c}{2}$,
and similarly $a$ and $c$. It follows that at least for one pair
from $\left\{ e_{2},x,y\right\} $, say $x$ and $y$, $\left(x+y\right)/2\notin L'$. 
\begin{casenv}
\item The line $L$ determined by $y$ and $x$ intersects the line $L'$.

Call the intersection point $x_{0}$. Let $\sigma\in\AGL n$ fix $L'$
and map $x$ to $y$. Then $\sigma$ must fix setwise the line $L$.
By Remark \ref{rem:infinite sequence}, $\sequence{\sigma^{m}\left(x_{0}\right)}{m<\omega}$
is infinite, and contained in $L$. But this contradicts Lemma \ref{lem:block are contained in lines}.

\item $L$ does not intersect $L'$.

Since $x,y$ are in the plane spanned by $\left\{ e_{1},e_{2}\right\} $,
this means that $y-x\in L'$ (i.e., the line determined by $x$ and
$y$ is parallel to $L'$). So if $\sigma\in\AGL n$ fixes $L'$ and
takes $x$ to $y$, then easily $\sequence{\sigma^{m}\left(x\right)}{m<\omega}$
is infinite and contained in the unique line containing $x$ and parallel
to $L'$, and we reach the same contradiction. 

\end{casenv}
\end{proof}
\begin{cor}
\label{cor:no Steiner}If $G$ is a group of permutations of $\Omega$
which properly contains $\AGL n$, then $G$ does not preserve a $k$-Steiner
system for $k\geq2$. \end{cor}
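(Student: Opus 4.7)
The plan is to combine the two previous results in the subsection, so there is essentially no work to do beyond book-keeping. Suppose for contradiction that some group $G$ of permutations of $\Omega$ properly contains $\AGL n$ and preserves a $k$-Steiner system $(\Omega, \calB)$ with $k \geq 2$. Since $\AGL n \leq G$, the group $\AGL n$ preserves the same Steiner system.

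First, I would invoke Lemma \ref{lem:If G prop. cont. AGL then k>2}: because $G$ properly contains $\AGL n$ and preserves a $k$-Steiner system, we must have $k > 2$. (This step uses Corollary \ref{cor:3-trans}: $G$ is $3$-transitive, whereas blocks of a $2$-Steiner system preserved by $\AGL n$ would have to lie inside lines by Lemma \ref{lem:block are contained in lines}, an incompatibility.)

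Then I would apply Proposition \ref{prop:AGL no Steiner} to the same Steiner system, now with $k > 2$: $\AGL n$ cannot preserve any $k$-Steiner system on $\Omega$ for $k > 2$. This directly contradicts the fact, noted above, that $\AGL n$ preserves $(\Omega, \calB)$.

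Since no step is delicate, the only thing to be careful about is that both lemmas apply to the same hypothesized $(\Omega,\calB)$; there is no subtle obstacle, and the argument fits in a few lines.
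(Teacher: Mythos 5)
Your argument is correct and is exactly the paper's proof: the paper also derives the corollary by combining Lemma \ref{lem:If G prop. cont. AGL then k>2} (forcing $k>2$) with Proposition \ref{prop:AGL no Steiner} (ruling out $k>2$ for $\AGL n$, hence for any supergroup). You have merely spelled out the book-keeping that the paper leaves implicit.
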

\begin{proof}
Follows from Lemma \ref{lem:If G prop. cont. AGL then k>2} and Proposition
\ref{prop:AGL no Steiner}. 
\end{proof}

\subsection{\label{sub:A-limit-of}A limit of Steiner systems}

The following definition is taken from \cite[Theorem 5.8.4, Theorem 5.8.2]{MR1357089}.
\begin{defn}
Let $\left(G,X\right)$ be a Jordan group. Then $G$ is said to preserve
on $X$ a \emph{limit of Steiner systems} if there is some $3\leq m\in\Nn$,
some totally ordered index set $\left(J,\leq\right)$ with no greatest
element, and a strictly increasing chain $\sequence{\Pi_{i}}{i\in J}$
of subsets of $X$ such that:
\begin{enumerate}
\item $\bigcup\set{\Pi_{i}}{i\in J}=X$;
\item for each $i\in J$, $G_{\left\{ \Pi_{i}\right\} }$ is $\left(m-1\right)$-transitive
on $\Pi_{i}$ (where $G_{\left\{ \Pi_{i}\right\} }$ is the setwise
stabilizer of $\Pi_{i}$), and preserves a non-trivial Steiner $\left(m-1\right)$-system
on $\Pi_{i}$;
\item if $i<j$ then $\Pi_{i}$ is a subset of a block of the $G_{\left\{ \Pi_{j}\right\} }$-invariant
Steiner $\left(m-1\right)$-system on $\Pi_{j}$;
\item for all $g\in G$ there is $i_{0}\in J$, dependent on $g$, such
that for every $i>i_{0}$ there is $j\in J$ such that $g\left(\Pi_{i}\right)=\Pi_{j}$
and the image under $g$ of every $\left(m-1\right)$-Steiner block
on $\Pi_{i}$ is an $\left(m-1\right)$-block on $\Pi_{j}$;
\item for each $i\in J$, the set $X\backslash\Pi_{i}$ is a Jordan set
for $\left(G,X\right)$. 
\end{enumerate}
\end{defn}
We will show that if $G$ is a group of permutations of $\Omega$
properly containing $\AGL n$, and $G$ preserves a limit of Steiner
systems, then if $G$ is not highly transitive, it must already preserve
a $k$-Steiner system for some $k\in\Nn$, contradicting Corollary
\ref{cor:no Steiner}. 

In fact, we will not use the full definition of a limit of Steiner
system. Instead, we will use the following definition:
\begin{defn}
\label{def:weak limit}Let $\left(G,X\right)$ be a Jordan group.
Then $G$ is said to preserve on $X$ a \emph{limit of Jordan sets}
if there is some totally ordered index set $\left(J,\leq\right)$
with no greatest element, and a strictly increasing chain $\sequence{\Pi_{i}}{i\in J}$
of subsets of $X$ such that:
\begin{enumerate}
\item $\bigcup\set{\Pi_{i}}{i\in J}=X$;
\item for all $g\in G$ there is $i_{0}\in J$, dependent on $g$, such
that for every $i>i_{0}$ there is $j\in J$ such that $g\left(\Pi_{i}\right)=\Pi_{j}$;
\item for each $i\in J$, the set $X\backslash\Pi_{i}$ is a Jordan set
for $\left(G,X\right)$. 
\end{enumerate}
\end{defn}
\begin{lem}
\label{lem:closure}Suppose that $G$ preserves a limit of of Jordan
sets on $X$ as witnessed by $\sequence{\Pi_{i}}{i\in J}$. Then for
every $g\in G$, for all $i$ large enough either $g^{-1}\left(\Pi_{i}\right)\subseteq\Pi_{i}$
or $g\left(\Pi_{i}\right)\subseteq\Pi_{i}$.\end{lem}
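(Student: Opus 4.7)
The plan is to extract the dichotomy directly from condition (2) of Definition \ref{def:weak limit} together with the fact that the chain $\sequence{\Pi_i}{i \in J}$ is strictly increasing and $J$ is totally ordered.

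Fix $g \in G$. By condition (2), there is some $i_0 \in J$ such that for every $i > i_0$, there exists $j(i) \in J$ with $g(\Pi_i) = \Pi_{j(i)}$; note that $j(i)$ is uniquely determined by $i$ because the chain is strictly increasing. I would then argue the dichotomy by cases on the comparison of $j(i)$ and $i$ in the total order on $J$: if $j(i) \leq i$, then $g(\Pi_i) = \Pi_{j(i)} \subseteq \Pi_i$ directly from monotonicity of the chain; if on the other hand $j(i) > i$, then $\Pi_i \subseteq \Pi_{j(i)} = g(\Pi_i)$, and applying $g^{-1}$ to both sides of this inclusion yields $g^{-1}(\Pi_i) \subseteq g^{-1}(g(\Pi_i)) = \Pi_i$. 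Either way, one of the two desired inclusions holds for each $i > i_0$.

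There is no real obstacle here: the content of the lemma is simply that because the $\Pi_i$ form a totally ordered chain, the image $g(\Pi_i)$ either sits inside $\Pi_i$ or contains $\Pi_i$, and in the latter case taking the preimage under $g$ flips the inclusion. The proof does not actually use conditions (1) or (3) of Definition \ref{def:weak limit}, nor does it need to be applied to $g^{-1}$ separately; condition (2) applied to $g$ alone already gives both alternatives.
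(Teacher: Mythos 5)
Your proof is correct and is essentially identical to the one in the paper: both apply condition (2) to $g$, compare $j$ with $i$ in the total order on $J$, and in the case $\Pi_i \subseteq g(\Pi_i)$ apply $g^{-1}$ to flip the inclusion. The only cosmetic difference is where the boundary case $j = i$ is placed, which is immaterial since both conclusions hold there.
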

\begin{proof}
By (2) of Definition \ref{def:weak limit}, for all $i$ large enough,
$g\left(\Pi_{i}\right)=\Pi_{j}$ for some $j\in J$. Fix such $i$,
and suppose that $j\geq i$. Then as $\Pi_{i}\subseteq\Pi_{j}$, $\Pi_{i}\subseteq g\left(\Pi_{i}\right)\Rightarrow g^{-1}\left(\Pi_{i}\right)\subseteq\Pi_{i}$.
If $j<i$, we get that $g\left(\Pi_{i}\right)\subseteq\Pi_{i}$. \end{proof}
\begin{assumption}
\label{asm:weakly limit}Suppose that $G$ is a group of permutation
of $\Omega$, properly containing $\AGL n$ and preserving a limit
of Jordan sets as witnessed by $\left(J,\leq\right)$ and $\sequence{\Pi_{i}}{i\in J}$.
Also, fix some $m\in\Nn$ such that $G$ is $m$-transitive but not
$\left(m+1\right)$-transitive.\end{assumption}
\begin{defn}
Say that a tuple of distinct elements $\bar{a}=\left(a_{0},\ldots,a_{m}\right)\in\left(\Omega\right)^{m+1}$
is \emph{very large} if for some $i\in J$, $a_{0},\ldots,a_{m-1}\in\Pi_{i}$
and $a_{m}\notin\Pi_{i}$. Say that an $\left(m+1\right)$-tuple $\bar{a}$
is \emph{large} if its orbit contains a very large $m+1$-tuple. \end{defn}
\begin{lem}
\label{lem:only one large orbit}The large $\left(m+1\right)$-tuples
consist of one orbit. \end{lem}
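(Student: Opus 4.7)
Since ``large'' means lying in the $G$-orbit of a very large tuple, it suffices to show that any two very large $(m+1)$-tuples are $G$-equivalent. Let $\bar a=(a_0,\ldots,a_m)$ be very large at index $i$ and $\bar b=(b_0,\ldots,b_m)$ at index $j$. The plan is to align the first $m$ coordinates using $m$-transitivity of $G$ (Assumption \ref{asm:weakly limit}) and then align the last coordinate using the Jordan-set property (condition (3) of Definition \ref{def:weak limit}), after first using that Jordan-set property to push the last coordinates of both tuples ``far out'' in the chain $\sequence{\Pi_i}{i\in J}$ so that the subsequent pieces interact cleanly.

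Concretely, by $m$-transitivity pick $g_0\in G$ with $g_0(a_0,\ldots,a_{m-1})=(b_0,\ldots,b_{m-1})$. By condition (2) of Definition \ref{def:weak limit} applied to $g_0^{-1}$, there is $l_0\in J$ such that for every $l>l_0$ there exists $l''\in J$ with $g_0^{-1}(\Pi_l)=\Pi_{l''}$; fix such an $l$ with $l\geq i,j$, and set $s=\max(l,l'')$. Since the chain is strictly increasing without a greatest element, $\Pi_s\subsetneq\Omega$, so $\Omega\setminus\Pi_s$ is non-empty. Using the Jordan-set property at $\Pi_i$, choose $a_m'\in\Omega\setminus\Pi_s\subseteq\Omega\setminus\Pi_i$ and $h_1\in G_{\Pi_i}$ with $h_1(a_m)=a_m'$; analogously pick $b_m'\in\Omega\setminus\Pi_s$ and $h_2\in G_{\Pi_j}$ with $h_2(b_m)=b_m'$. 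Then $g_0h_1(\bar a)=(b_0,\ldots,b_{m-1},g_0(a_m'))$, and the inclusion $a_m'\notin\Pi_{l''}=g_0^{-1}(\Pi_l)$ forces $g_0(a_m')\notin\Pi_l$; also $b_m'\notin\Pi_s\supseteq\Pi_l$. Finally, the Jordan-set property at $\Pi_l$ supplies $h_3\in G_{\Pi_l}$ with $h_3(g_0(a_m'))=b_m'$; such an $h_3$ fixes $\Pi_l$ pointwise, and in particular $b_0,\ldots,b_{m-1}$. Thus $h_2^{-1}h_3g_0h_1(\bar a)=\bar b$, showing $\bar a$ and $\bar b$ are in the same $G$-orbit.

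The main obstacle is arranging that the image $g_0(a_m')$ of the (modified) last coordinate lands outside $\Pi_l$: bare $m$-transitivity gives no control over where $g_0$ sends $a_m$, and a single final Jordan-set fix-up at $\Pi_l$ would be useless if $g_0(a_m')\in\Pi_l$. Condition (2) of Definition \ref{def:weak limit} is exactly what resolves this, by identifying a preimage $g_0^{-1}(\Pi_l)=\Pi_{l''}$ inside the chain; the Jordan-set property then provides the freedom to replace $a_m$ by a point $a_m'$ outside $\Pi_{l''}$ before applying $g_0$, which in turn pushes $g_0(a_m')$ into $\Omega\setminus\Pi_l$ where the final Jordan move at index $l$ can match it with $b_m'$.
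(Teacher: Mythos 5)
Your proof is correct and follows essentially the same route as the paper's: $m$-transitivity to align the first $m$ coordinates, the Jordan-set property to push the last coordinates beyond a sufficiently large $\Pi_l$ chosen via condition (2) of Definition \ref{def:weak limit}, and a final Jordan move at $\Pi_l$ to match them. The only (immaterial) difference is that you apply condition (2) to $g_0^{-1}$ and track the preimage $g_0^{-1}(\Pi_l)=\Pi_{l''}$, whereas the paper applies it to $\sigma$ and re-chooses the witness index for $\bar a$ so that $\sigma(\Pi_i)$ lies in the chain.
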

\begin{proof}
We need to show that if $\bar{a}$ and $\bar{b}$ are large, then
for some $\sigma\in G$, $\sigma\left(\bar{a}\right)=\bar{b}$. We
may assume that both $\bar{a}$ and $\bar{b}$ are very large. Let
$i\in J$ be such that $a_{0},\ldots,a_{m-1}\in\Pi_{i}$ and $a_{m}\notin\Pi_{i}$.
By $m$-transitivity, for some $\sigma\in G$, $\sigma\left(\bar{a}\upharpoonright m\right)=\bar{b}\upharpoonright m$.
Let $i_{0}\in J$ correspond to (2) of Definition \ref{def:weak limit}
applied to $\sigma$, and let $i'>i_{0},i$. Since $\Omega\backslash\Pi_{i}$
is a Jordan set and $\Pi_{i'}\neq\Omega$, we can fix $\Pi_{i}$ and
move $a_{m}$ out of $\Pi_{i'}$. This allows us to assume that $i>i_{0}$.
So $\sigma\left(\Pi_{i}\right)=\Pi_{j}$ for some $j\in J$, and $\bar{b}\upharpoonright m\subseteq\Pi_{j}$.
By moving $b_{m}$, fixing some $\Pi_{j'}$ containing $\bar{b}\upharpoonright m$,
we may assume that $b_{m}\notin\Pi_{j}$. 

But since $\Omega\backslash\Pi_{j}$ is a Jordan set, and $\sigma\left(a_{m}\right)\notin\Pi_{j}$,
we can map $\sigma\left(a_{m}\right)$ to $b_{m}$ fixing $\Pi_{j}$
via some $\tau\in G$. Then $\tau\circ\sigma\left(\bar{a}\right)=\bar{b}$. \end{proof}
\begin{lem}
\label{lem:last not on line is large}If $\bar{a}=\left(a_{1},\ldots,a_{m+1}\right)\in\Omega^{m+1}$
is such that $\bar{a}\upharpoonright m\subseteq L$ for some line
$L$ and $a_{m+1}$ is not in $L$ then $\bar{a}$ is large.\end{lem}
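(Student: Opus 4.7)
The plan is to suppose that $\bar{a}$ is not large and to derive a contradiction by forcing some $\Pi_{j}$ to equal $\Omega$. The two levers are the pointwise stabilizer of $L$ inside $\AGL n$ (to populate the $G$-orbit of $\bar{a}$) and a suitable translation in $\AGL n$ which moves $L$ to a disjoint parallel line, combined with condition (2) of Definition \ref{def:weak limit}.

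First I would verify that $H$, the pointwise stabilizer of $L$ in $\AGL n$, acts transitively on $\Omega\setminus L$: after conjugating by an element of $\AGL n$ we may take $L=\Qq e_{1}$, and then $H$ consists of linear maps with block form $\left(\begin{smallmatrix} 1 & * \\ 0 & T \end{smallmatrix}\right)$ where $T\in\GL{n-1}$ (or its natural analogue for $n=\omega$) and the top-row entries are arbitrary, from which a direct matrix computation yields transitivity on $\Omega\setminus L$. Since every element of $H\subseteq G$ fixes $a_{1},\ldots,a_{m}\in L$ pointwise and sends $a_{m+1}\in\Omega\setminus L$ to any chosen point of $\Omega\setminus L$, the $G$-orbit of $\bar{a}$ contains $(a_{1},\ldots,a_{m},p)$ for every $p\in\Omega\setminus L$. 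If $\bar{a}$ were not large then none of these would be very large, so whenever $\{a_{1},\ldots,a_{m}\}\subseteq\Pi_{i}$ we would have $p\in\Pi_{i}$ for every $p\in\Omega\setminus L$, equivalently $\Omega\setminus\Pi_{i}\subseteq L$.

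Next I would pick $\sigma\in\AGL n$ to be a translation by a vector not parallel to $L$, so that $\sigma(L)\cap L=\emptyset$. By condition (2) of Definition \ref{def:weak limit} applied to $\sigma$, for all sufficiently large $i\in J$ there is $j(i)\in J$ with $\sigma(\Pi_{i})=\Pi_{j(i)}$; since $\sigma$ is a bijection of $\Omega=\bigcup_{i}\Pi_{i}$, the map $i\mapsto j(i)$ takes cofinally many values in $J$. Fix $i$ large enough that both $\Pi_{i}$ and $\Pi_{j(i)}$ contain $\{a_{1},\ldots,a_{m}\}$; applying the derived inclusion to $i$ yields $\Omega\setminus\Pi_{j(i)}=\sigma(\Omega\setminus\Pi_{i})\subseteq\sigma(L)$, and applying it to $j(i)$ yields $\Omega\setminus\Pi_{j(i)}\subseteq L$. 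Hence $\Omega\setminus\Pi_{j(i)}\subseteq L\cap\sigma(L)=\emptyset$, so $\Pi_{j(i)}=\Omega$, contradicting that $\sequence{\Pi_{i}}{i\in J}$ is strictly increasing with no greatest element. The main subtlety I anticipate is the joint largeness of $i$ and $j(i)$, which requires combining cofinality of $i\mapsto j(i)$ with the fact that any finite set lies in $\Pi_{i}$ for all sufficiently large $i$.
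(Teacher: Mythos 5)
Your proof is correct and follows essentially the same strategy as the paper's: use the transitivity of the pointwise stabilizer of $L$ on $\Omega\setminus L$ to reduce to finding a point outside $\Pi_i\cup L$, and rule out $\Omega\setminus\Pi_i\subseteq L$ by pushing $\Pi_i$ with an affine map via condition (2) of Definition \ref{def:weak limit} and intersecting, contradicting that $J$ has no greatest element. The only cosmetic differences are that you argue by contraposition and use a disjoint parallel translate of $L$ (so $\Pi_{j(i)}=\Omega$ outright), whereas the paper uses an arbitrary $L'\neq L$ and concludes from a co-finite $\Pi_{j'}$.
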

\begin{proof}
Let $\Pi_{i}$ be such that $\Pi_{i}$ contains $\bar{a}\upharpoonright m$.
We claim that $\Omega\backslash\Pi_{i}\not\subseteq L$. Suppose not,
i.e., that $\Omega\backslash L\subseteq\Pi_{i}$. Let $L'\neq L$
and let $\sigma\in\AGL n$ take $L$ to $L'$. We may assume (perhaps
increasing $i$) that $\sigma\left(\Pi_{i}\right)=\Pi_{j}$ for some
$j\in J$, and so $\Omega\backslash L'\subseteq\Pi_{j}$. Let $j'>i,j$,
then $\Pi_{j'}\supseteq\Omega\backslash A$ where $\left|A\right|\leq1$.
But then $J$ must have a last element. 

Since we can map $a_{m}$ to any point outside of $L$, we can map
it to a point outside of $\Pi_{i}$. \end{proof}
\begin{lem}
\label{lem:permutations}If $\bar{a}\in\Omega^{m+1}$ is large, $\pi\in S_{m+1}$
any permutation, then $\bar{a}^{\pi}=\sequence{a_{\pi\left(i\right)}}{i<m}$
is also large.\end{lem}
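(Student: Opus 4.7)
The plan is to reduce the claim to adjacent transpositions. Since the $G$-action on $(m+1)$-tuples commutes with the coordinate-permutation action of $S_{m+1}$ (we have $(g\bar{a})^{\pi}=g(\bar{a}^{\pi})$), the set of permutations $\pi$ under which largeness is preserved forms a subgroup of $S_{m+1}$. It is therefore enough to show that each adjacent transposition $(j,j+1)$ for $0\le j\le m-1$ preserves largeness. I may further assume, by replacing $\bar{a}$ by a $G$-translate, that $\bar{a}$ is very large: there is some $i\in J$ with $a_{0},\ldots,a_{m-1}\in\Pi_{i}$ and $a_{m}\notin\Pi_{i}$.

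For adjacent transpositions $(j,j+1)$ with $j<m-1$, the tuple $\bar{a}^{(j,j+1)}$ is itself very large at the same index $i$, since only the first $m$ positions are permuted among themselves while $a_{m}$ is unchanged.

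The essential case is the transposition $(m-1,m)$. My plan is to construct an element $\tau\in G$ fixing $a_{0},\ldots,a_{m-2}$ pointwise and swapping $a_{m-1}$ with $a_{m}$; such a $\tau$ maps $\bar{a}^{(m-1,m)}=(a_{0},\ldots,a_{m-2},a_{m},a_{m-1})$ to $\bar{a}$, certifying that $\bar{a}^{(m-1,m)}$ lies in the $G$-orbit of the very large tuple $\bar{a}$, and is therefore large.

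The construction of $\tau$ proceeds in two stages, exploiting only the $m$-transitivity of $G$. Since $G$ properly contains $\AGL n$, we have $m\ge3$ by Corollary \ref{cor:3-trans}, so stabilizing the $m-2\ge1$ points $a_{0},\ldots,a_{m-3}$ is a non-vacuous condition, and the stabilizer $G_{(a_{0},\ldots,a_{m-3})}$ is $2$-transitive on the complement. Choose $\tau_{0}$ in this stabilizer mapping the ordered pair $(a_{m-1},a_{m})$ to $(a_{m},a_{m-1})$. This $\tau_{0}$ may move $a_{m-2}$; however, the further stabilizer $G_{(a_{0},\ldots,a_{m-3},a_{m-1},a_{m})}$ is transitive on $\Omega\setminus\{a_{0},\ldots,a_{m-3},a_{m-1},a_{m}\}$ (by $m$-transitivity), and one checks that $\tau_{0}^{-1}(a_{m-2})$ lies in this complement, since $\tau_{0}$ sends $a_{m-1}$ and $a_{m}$ to $a_{m}$ and $a_{m-1}$ respectively, so neither of them equals $\tau_{0}^{-1}(a_{m-2})$. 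Picking $h$ in this further stabilizer with $h(a_{m-2})=\tau_{0}^{-1}(a_{m-2})$ and setting $\tau=\tau_{0}\circ h$ yields the desired element. I expect no significant obstacle; the key conceptual observation is that a two-level use of $m$-transitivity, first via $2$-transitivity of the stabilizer of $m-2$ points and then via $1$-transitivity of the stabilizer of $m-1$ points, suffices to realize a single transposition fixing all of $a_{0},\ldots,a_{m-2}$, even though $G$ itself is only $m$-transitive.
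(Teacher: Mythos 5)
Your reduction to transpositions and your treatment of $(j,j+1)$ for $j<m-1$ are fine (the paper similarly reduces to the transpositions $(k\;\,m)$, $k<m$). But the essential case $(m-1,m)$ contains a fatal miscount. The set $\{a_{0},\ldots,a_{m-3},a_{m-1},a_{m}\}$ has $m$ elements, not $m-1$: it consists of the $m-2$ points $a_{0},\ldots,a_{m-3}$ together with $a_{m-1}$ and $a_{m}$. Transitivity of the pointwise stabilizer of $m$ points on the complement is precisely $(m+1)$-transitivity of $G$, which is exactly what fails under Assumption \ref{asm:weakly limit}. So the element $h$ need not exist, and no rearrangement of your two-stage construction can repair this: any $\tau\in G$ fixing $a_{0},\ldots,a_{m-2}$ pointwise and swapping $a_{m-1}$ with $a_{m}$ would have to come from the stabilizer of at least $m$ prescribed points, which pure $m$-transitivity does not control. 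Indeed, the statement you are implicitly proving --- that the $G$-orbit of an arbitrary $(m+1)$-tuple of distinct points is closed under the coordinate transposition $(m-1,m)$ --- is false for general $m$-transitive, non-$(m+1)$-transitive groups: for $\PGL{2}$ acting on the projective line ($m=3$), transposing two entries of a $4$-tuple changes the cross-ratio and hence, generically, the orbit.

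The lemma therefore cannot be obtained from transitivity alone; it needs geometric input that your argument never uses. The paper instead exhibits one concrete very large tuple for which the claim can be checked directly (which suffices by Lemma \ref{lem:only one large orbit}): it shows that for all $i$ large enough, $\Pi_{i}$ is closed under sufficiently many translations and base shifts that it contains the integer lattice $\Zz^{<\omega}$, hence meets some line $L$ through a given point $x\notin\Pi_{i}$ in at least $m-1$ points. Placing $a_{0},\ldots,a_{k-1},a_{k+1},\ldots,a_{m-1},x$ on such a line $L$ with $a_{k}\in\Pi_{i}\setminus L$ makes $(a_{0},\ldots,a_{m-1},x)$ very large, while the transposed tuple is large by Lemma \ref{lem:last not on line is large}. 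You would need to replace your construction of $\tau$ by an argument of this kind.
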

\begin{proof}
It is enough to prove it for $\pi$ of the form $\left(k\;\; m\right)$
for some $k<m$. Since large tuples form one orbit (Lemma \ref{lem:only one large orbit}),
it is enough to show the lemma for \uline{one} large tuple. 

We will find a line $L$ and some $i\in J$ such that $L\cap\Pi_{i}$
contains at least $m-1$ points: $a_{0},\ldots a_{k-1},a_{k+1},\ldots,a_{m-1}$,
and both $L\backslash\Pi_{i}$ and $\Pi_{i}\backslash L$ are nonempty,
say containing $x$ and $a_{k}$ respectively. Then $\bar{a}=\left(a_{0},\ldots,a_{m-1},x\right)$
is very large by definition, but also $\bar{a}^{\pi}$ is large by
Lemma \ref{lem:last not on line is large}. Together we are done. 

First assume that $n=\omega$, and let $\set{e_{j}}{j\in\Zz}$ be
the standard basis for $\Omega$ enumerated by $\Zz$. Let $\sigma_{0}\in\GL n$
take $e_{j}$ to $e_{-j}$ (so $\sigma^{2}=\id$). By Lemma \ref{lem:closure},
for all $i$ large enough, $\Pi_{i}$ contains $0$ and it is closed
under $-\id$ and under $\sigma_{0}$. 

By applying Lemma \ref{lem:closure} with $\sigma$ being translation
by $e_{1}$, $\sigma\left(x\right)=x+e_{1}$, for all large enough
$i$, $\Pi_{i}$ is closed under translation by $e_{1}$ \uline{and}
by $-e_{1}$. Indeed, if for instance $\Pi_{i}$ is closed under $\sigma$,
then $\left(-\id\right)\sigma\left(-\id\right)\left(\Pi_{i}\right)\subseteq\Pi_{i}$
but $\left(-\id\right)\sigma\left(-\id\right)=\sigma^{-1}$. 

By applying Lemma \ref{lem:closure} with $\sigma$ being base shift,
i.e., $\sigma\in\GL n$ and $\sigma\left(e_{j}\right)=e_{j+1}$, for
all $i$ large enough $\Pi_{i}$ is closed under base shift \uline{and}
its inverse. This follows again from the fact that $\sigma^{-1}=\sigma_{0}\sigma\sigma_{0}$. 

Now it follows that for $i$ large enough, $\Pi_{i}$ is closed under
translation by $\pm e_{j}$ for all $j\in\mathbb{Z}$. Indeed, suppose
$\sigma_{j}$ is translation by $e_{j}$, and $\tau_{j\to1}\in\GL n$
takes $e_{k}$ to $e_{k-j+1}$. Then $\sigma_{j}=\tau_{j\to1}^{-1}\sigma_{1}\tau_{j\to1}$. 

But now, for all $i$ large enough, $\Pi_{i}$ contains all the integer
valued linear combinations of $\set{e_{j}}{j\in\Zz}$, which is just
$\Zz^{<\omega}$. 

Let $x\in\Omega\backslash\Pi_{i}$. Then, if $p\in\Nn$ is the product
of the denominators of the rationals appearing in $x$, then the line
$L=\Zz px$ intersects $\Pi_{i}$ in infinitely many points. In particular,
we can find $m-1$ points on $L\cap\Pi_{i}$, and find some $a_{k}\in\Zz^{<\omega}\backslash L$.

The proof for finite $n$ is similar but simpler, since we do not
need to use $\sigma_{0}$, as the base shift modulo $n$ map $\sigma\in\GL n$
is of finite order hence $\sigma\left(\Pi_{i}\right)=\Pi_{i}$ for
all large enough $i\in J$, so automatically $\Pi_{i}$ is closed
under $\sigma^{-1}$. 
\end{proof}
In light of Lemma \ref{lem:permutations} we can extend the definition
of large tuples to $\left(m+1\right)$-sets. We will define an $m$-Steiner
system on $\Omega$. For a subset of $s\subseteq\Omega$ of size $m$,
let 
\[
\bb_{s}=\set{x\in\Omega}{s\cup\left\{ x\right\} \mbox{ is not large}}.
\]
Equivalently, $\bb_{s}$ is the set of points $x\in\Omega$ such that
whenever $s$ is sent by $G$ to some line, $x$ is sent to the same
line (this follows from Lemma \ref{lem:only one large orbit} and
Lemma \ref{lem:last not on line is large}). It follows that $s\subseteq\bb_{s}$. 
\begin{lem}
\label{lem:exchange}Assume $s\subseteq\Omega$ is of size $m$, and
let $a\in s$. Then if $b\in\bb_{s}$ then $\bb_{s}=\bb_{\left(s\backslash\left\{ a\right\} \right)\cup\left\{ b\right\} }$. \end{lem}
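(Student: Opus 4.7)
The plan is to exploit the equivalent formulation of $\bb_{s}$ noted just before the lemma: $c\in\bb_{s}$ iff for every $g\in G$ sending $s$ onto a line $L$, also $g(c)\in L$. Once this is granted, it suffices to check that the collection of pairs $(g,L)$ with $g(s)\subseteq L$ coincides with the analogous collection for $s'=(s\setminus\{a\})\cup\{b\}$; the equality $\bb_{s}=\bb_{s'}$ will then follow by reading the reformulation on both sides.

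I begin by dispensing with the trivial case $b=a$ (where $s'=s$) and assuming henceforth that $b\neq a$ and $b\notin s$, so that $s'$ is a genuine $m$-set and $\bb_{s'}$ is defined. The key observation is the symmetry $s'\cup\{a\}=s\cup\{b\}$. By definition, $b\in\bb_{s}$ says exactly that $s\cup\{b\}$ is not large; rewritten via this identity, it says $s'\cup\{a\}$ is not large, i.e.\ $a\in\bb_{s'}$. So the hypothesis is automatically symmetric in $a$ and $b$ with respect to $s$ and $s'$.

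The main step now runs in parallel on each side. Suppose $g\in G$ sends $s$ onto a line $L$. The reformulation applied to $b\in\bb_{s}$ gives $g(b)\in L$, and therefore $g(s')=g(s\setminus\{a\})\cup\{g(b)\}\subseteq L$, so $g(s')$ also lies on $L$. Symmetrically, if $h\in G$ sends $s'$ onto a line $M$, then $a\in\bb_{s'}$ forces $h(a)\in M$, and hence $h(s)\subseteq M$. Consequently the family of pairs $(g,L)$ with $g(s)$ on $L$ is literally the same as the family with $g(s')$ on $L$, and applying the reformulation for $s$ and for $s'$ yields $\bb_{s}=\bb_{s'}$.

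The only substantive content is the reformulation itself, which the excerpt already attributes to Lemmas~\ref{lem:only one large orbit} and \ref{lem:last not on line is large}; granted it, nothing nontrivial remains. The only real idea needed is the symmetry $s'\cup\{a\}=s\cup\{b\}$, which turns the one-sided hypothesis on $b$ into the matching statement on $a$ and lets both inclusions collapse into the same equality. I do not anticipate any serious obstacle.
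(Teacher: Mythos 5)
Your proof is correct and follows essentially the same route as the paper: both arguments rest on the reformulation of $\bb_{s}$ via images of $s$ on lines, and both use the identity $s'\cup\{a\}=s\cup\{b\}$ to transfer the hypothesis $b\in\bb_{s}$ into the statement that images of $s'$ on a line force the image of $a$ onto that line. Your symmetric packaging (observing $a\in\bb_{s'}$ explicitly) is just a slightly tidier presentation of the paper's two inclusions.
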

\begin{proof}
Let $t=\left(s\backslash\left\{ a\right\} \right)\cup\left\{ b\right\} $.
To see that $\bb_{t}\subseteq\bb_{s}$, take some $x\in\bb_{t}$,
and some $\sigma\in G$ that sends $s$ to some line $L$. Then $\sigma\left(b\right)\in L$,
so $\sigma\left(t\right)\subseteq L$, so $\sigma\left(x\right)\in L$.

For the other direction, assume that $x\in\bb_{s}$ and that $\sigma\in G$
maps $t$ to a line $L$. Since $t\cup\left\{ a\right\} =s\cup\left\{ b\right\} $
is not large (since $b\in\bb_{s}$), it follows that $\sigma\left(a\right)\in L$,
and thus $\sigma\left(x\right)\in L$. 
\end{proof}
Let 
\[
\calB=\set{\bb_{s}}{s\subseteq\Omega,\left|s\right|=m}.
\]

\begin{cor}
\label{cor:noLimit}$\left(\Omega,\calB\right)$ is an $m$-Steiner
system on $\Omega$ preserved by $G$.\end{cor}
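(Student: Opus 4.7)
The plan is to check each defining condition of Definition \ref{def:steiner} in turn, exploiting the reformulation (established in the proof of Lemma \ref{lem:exchange}) that $x\in\bb_s$ iff whenever $\sigma\in G$ sends $s$ into a line $L$, one has $\sigma(x)\in L$. First, $G$-invariance is immediate: for $\sigma\in G$ and an $m$-set $s$ one has $\sigma(\bb_s)=\bb_{\sigma(s)}$, since largeness of $(m+1)$-tuples is $G$-invariant (Lemma \ref{lem:only one large orbit}) and depends only on the underlying set (Lemma \ref{lem:permutations}). Consequently $m$-transitivity of $G$ forces all blocks to have the same cardinality.

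For uniqueness of the block through an $m$-set $s$, suppose $s$ sits inside a block $\bb_t$. I replace the elements of $t\setminus s$ by elements of $s\setminus t$ one at a time; since $s\setminus t\subseteq s\subseteq\bb_t$ and the block does not change under each swap (Lemma \ref{lem:exchange}), each exchange is legitimate, and after at most $m$ steps I obtain $\bb_s=\bb_t$. For the bound $|\bb_s|>m$, suppose for contradiction that $\bb_s=s$. Then every $(m+1)$-set containing $s$ is large, and by $m$-transitivity every $(m+1)$-set is large. Combining Lemmas \ref{lem:only one large orbit} and \ref{lem:permutations}, any two $(m+1)$-tuples of distinct elements then lie in a single $G$-orbit, i.e.\ $G$ is $(m+1)$-transitive, contradicting Assumption \ref{asm:weakly limit}. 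Finally, $G$ is 3-transitive by Corollary \ref{cor:3-trans}, so $m\geq 3$ and $\Omega$ contains both collinear and non-collinear $m$-sets: taking $s$ to be $m$ points on a line $L$ we get $\bb_s\subseteq L$ by Lemma \ref{lem:last not on line is large}, whereas for non-collinear $t$ we have $t\not\subseteq L$, so $\bb_t\neq\bb_s$ by uniqueness, giving at least two distinct blocks.

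The main obstacle, I expect, is the lower bound $|\bb_s|>m$: this is the one step where the hypothesis that $G$ is \emph{only} finitely transitive is used in a nontrivial way, and it crucially relies on the fact that largeness, a priori a property of ordered tuples, passes to underlying sets via Lemma \ref{lem:permutations}, so that transitivity on $(m+1)$-sets (easy to extract from $\bb_s=s$) can be promoted to full $(m+1)$-transitivity on tuples and contradict the choice of $m$.
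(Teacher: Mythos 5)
Your proof is correct and follows essentially the same route as the paper's: $G$-invariance via $\sigma\left(\bb_{s}\right)=\bb_{\sigma\left(s\right)}$, equal block sizes by $m$-transitivity, the bound $\left|\bb_{s}\right|>m$ by contradiction with $(m+1)$-transitivity through Lemma \ref{lem:only one large orbit}, and uniqueness by iterating Lemma \ref{lem:exchange}. The only (immaterial) divergence is your argument that there is more than one block via collinear versus non-collinear $m$-sets, where the paper just observes that a large tuple exists.
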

\begin{proof}
Note that by definition, $G$ preserves $\calB$ and $\sigma\left(\bb_{s}\right)=\bb_{\sigma\left(s\right)}$
for all $\sigma\in G$ and $s\subseteq\Omega$ of size $m$. By $m$-transitivity,
it follows that $\left|\bb_{1}\right|=\left|\bb_{2}\right|$ for any
$\bb_{1},\bb_{2}\in\calB$. We already noted that $\left|\bb\right|\geq m$
for all $\bb\in\calB$. If $\left|\bb\right|=m$ for all $\bb\in\calB$,
then all $\left(m+1\right)$-sets are large, and by Lemma \ref{lem:only one large orbit},
this would mean that $G$ is $\left(m+1\right)$-transitive. This
shows that $\left|\bb\right|>m$ for all $\bb\in\calB$. 

Since there is a large tuple, there is more than one block.

Finally we must check that if $s\subseteq\Omega$ is of size $m$,
and $\bb\in\calB$ contains $s$ then $\bb=\bb_{s}$. Suppose $\bb=\bb_{t}$
for some $t$. Then $s\subseteq\bb_{t}$, so by Lemma \ref{lem:exchange},
we can replace every element of $t$ by an element of $s$ until we
get that $\bb_{s}=\bb_{t}$.
\end{proof}

\subsection{Conclusion}
\begin{cor}
Theorem \ref{thm:main} holds: $\AGL n$ is a maximal-closed subgroup
of $S_{\omega}$ for $\omega\geq n\geq2$. \end{cor}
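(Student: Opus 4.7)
The plan is to take an arbitrary closed subgroup $G \leq S_{\omega}$ with $\AGL n \subsetneq G$ and show that $G = S_{\omega}$. Since the product topology on $S_{\omega}$ has a base consisting of cosets of pointwise stabilizers of finite tuples, a subgroup is dense in $S_{\omega}$ if and only if it is highly transitive; as $G$ is already closed, it suffices to prove that $G$ is highly transitive.

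First, by Corollary \ref{cor:3-trans}, $G$ is 3-transitive, and $G$ inherits the Jordan property from $\AGL n$: for any proper affine subspace $A \subsetneq \Omega$ the complement $\Omega \setminus A$ is a Jordan set for $\AGL n$, hence for $G$, while $|A|$ is infinite so the size condition in the definition of a Jordan group is vacuous. Now suppose for contradiction that $G$ is not highly transitive. Then Fact \ref{fac:Macpharson-and-Adeleke} forces $G$ to preserve on $\Omega$ one of four structures: a cyclic separation relation, a $D$-relation, a Steiner $k$-system with $k \geq 2$, or a limit of Steiner systems. Since any structure preserved by $G$ is also preserved by the subgroup $\AGL n$, the first two possibilities are ruled out by Propositions \ref{prop:No S relation} and \ref{prop:noD}, and the third possibility is ruled out by Corollary \ref{cor:no Steiner}.

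The only remaining case, that $G$ preserves a limit of Steiner systems, I would handle by reducing it to the already dispatched third case. The conditions (1), (2), (5) in the definition of a limit of Steiner systems immediately yield the three conditions of Definition \ref{def:weak limit}, so $G$ preserves a limit of Jordan sets on $\Omega$. Since $G$ is not highly transitive, there is a largest $m$ for which $G$ is $m$-transitive, which is precisely the integer in Assumption \ref{asm:weakly limit}. Corollary \ref{cor:noLimit} then furnishes a $G$-invariant $m$-Steiner system on $\Omega$, contradicting Corollary \ref{cor:no Steiner}. All four cases thus yielding contradictions, $G$ must be highly transitive and therefore equal to $S_{\omega}$. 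At the level of this corollary there is no real obstacle: the delicate step, namely the closure argument of Lemma \ref{lem:permutations} which turns a limit of Jordan sets into a genuine $G$-invariant Steiner system, is already behind us, and the task here is simply to assemble the results of Sections \ref{sub:A-cyclic-separation}--\ref{sub:A-limit-of} into the Adeleke--Macpherson trichotomy.
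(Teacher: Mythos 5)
Your proof is correct and follows essentially the same route as the paper: reduce maximality to high transitivity, establish 3-transitivity and the Jordan property, invoke the Adeleke--Macpherson classification, and eliminate each of the four cases via Propositions \ref{prop:No S relation}, \ref{prop:noD}, Corollary \ref{cor:no Steiner}, and (for the limit case) the reduction through Definition \ref{def:weak limit} and Corollary \ref{cor:noLimit}. The only nitpick is that the Jordan witness should be a \emph{positive-dimensional} proper affine subspace (e.g., a line), since a single point is a proper affine subspace whose complement would not satisfy the size condition for all $k$.
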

\begin{proof}
Suppose that $G$ is some group of permutations of $\Omega$ strictly
containing $\AGL n$. 

By Corollary \ref{cor:3-trans}, $G$ is 3-transitive. Since $G$
is a Jordan group as witnessed by e.g., lines, we may apply Fact \ref{fac:Macpharson-and-Adeleke}.
By Proposition \ref{prop:No S relation}, Proposition \ref{prop:noD}
and Corollary \ref{cor:no Steiner}, $G$ must preserve a limit of
Steiner systems. But by Corollary \ref{cor:noLimit}, which assumes
even the weaker hypothesis of Assumption \ref{asm:weakly limit},
unless $G$ is highly transitive, $G$ preserves an $m$-Steiner system
where $G$ is $m$-transitive but not $\left(m+1\right)$-transitive
(so $m\geq3$) and this contradicts Corollary \ref{cor:no Steiner}.
\end{proof}
Recall from the introduction the question of Junker and Ziegler \cite{DBLP:journals/jsyml/JunkerZ08}
: does every non-$\omega$-categorical theory admit infinitely many
reducts? Bodirsky and Macpherson \cite{Bodirsky2013} have answered
this question negatively. Our next corollary (which was noticed by
David Evans) gives yet another counterexample. This also answers a
question from \cite{Bodirsky2013}.
\begin{defn}
\label{def:reduct}A structure $N$ is a \emph{(definable) reduct}
of a structure $M$ if they share the same universe, and the basic
relations of $N$ are $\emptyset$-definable subsets of $M$. The
structure $N$ is a \emph{proper reduct} if there are $\emptyset$-definable
subsets of $M$ which are not $\emptyset$-definable in $N$. For
a complete first order theory $T$ in the language $L$, we say that
a (complete first order) theory $T'$ in the language $L'$ is a (proper)
reduct of $T$ if there is a model $M$ of $T$ and a model $N$ of
$T'$ such that $N$ is a (proper) reduct of $M$. Equivalently, every
model of $T$ has a (proper) reduct that is a model of $T'$. The
theory $T'$ is a \emph{trivial} reduct when $T'$ is the theory of
an infinite set with no structure. \end{defn}
\begin{cor}
\label{cor:David Evans}The theory $T=Th\left(\Qq,f\right)$ where
$f\left(x,y,z\right)=x+y-z$ has no non-trivial proper definable reduct. \end{cor}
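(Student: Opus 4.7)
The plan is to apply Theorem \ref{thm:main} (case $n = \omega$) at the countable $\omega$-saturated model of $T$. Since $T = \mathrm{Th}(\Qq, f)$ has quantifier elimination in $\{f\}$ and is strongly minimal (inspection of the atomic affine equations in one variable shows every $\emptyset$-definable subset of the universe with parameters is finite or cofinite), all models of $T$ are infinite $\Qq$-affine spaces, and the countable $\omega$-saturated model is $V = (\QqOmega, f)$, the $\omega$-dimensional $\Qq$-affine space, with $\Aut(V) = \AGL{\omega}$.

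Given any definable reduct $T'$ of $T$, I would let $N$ be the $T'$-reduct of $V$: same universe, with each basic symbol of $T'$ interpreted by its prescribed $\emptyset$-$T$-definable formula. Since reducts of $\omega$-saturated structures are $\omega$-saturated, $N$ is $\omega$-saturated. The group $\Aut(N) \leq S_\omega$ is closed and contains $\Aut(V) = \AGL{\omega}$, so by Theorem \ref{thm:main} we must have $\Aut(N) = \AGL{\omega}$ or $\Aut(N) = S_\omega$.

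In the first subcase every basic relation of $N$ is preserved by all permutations of $V$ and is therefore equality-definable, so $T'$ is the trivial reduct. In the second subcase, $V$ and $N$ are both $\omega$-saturated with the same automorphism group, so by saturation their $\Aut$-orbits on $V^k$ coincide with their complete $\emptyset$-types; hence $T$ and $T'$ have exactly the same complete $k$-types for every $k$, and the canonical restriction map $S_k(T) \to S_k(T')$ is a continuous bijection between compact Hausdorff Stone spaces, hence a homeomorphism. Its clopen sets --- i.e.\ the $\emptyset$-definable relations --- therefore correspond, so $T$ and $T'$ define the same subsets of $V^k$ and $T'$ is not a proper reduct. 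The one non-routine step is this second subcase: because $T$ is not $\omega$-categorical, the naive Galois correspondence ``same automorphism group implies same definable sets'' is not automatic, and the compact-Hausdorff argument on type spaces (powered by $\omega$-saturation) is what bridges the gap.
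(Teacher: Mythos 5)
Your proof is correct and follows essentially the same route as the paper: pass to the countable saturated model $\left(\QqOmega,f\right)$, use that its reduct is saturated so that $\Aut$-orbits coincide with complete types, and invoke Theorem \ref{thm:main} to pin down the automorphism group of the reduct. The only difference is organizational --- you split into cases on $\Aut(N)$ and make explicit the type-space homeomorphism argument that the paper compresses into ``since $N$ is a proper reduct, by saturation there are two tuples $\bar{a}\equiv_{L'}\bar{b}$ with $\bar{a}\not\equiv_{L}\bar{b}$'' --- but the substance is identical.
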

\begin{proof}
Let $L=\left\{ f\right\} $. Then $T$ is a reduct of $T_{0}$, the
theory of a divisible torsion free abelian group (or a vector space
over $\Qq$) in the language $\left\{ +,0\right\} $. The vector space
$\Qq^{<\omega}$ is a countable saturated model of $T_{0}$ (since
$T_{0}$ is strongly minimal, or by quantifier elimination). Thus
its reduct $M$ to $L$ (so $M\models T$) is also saturated. Suppose
that $M$ has a proper reduct $N$ with language $L'$. Since $N$
is a proper reduct, by saturation there are two tuples $\bar{a},\bar{b}$
in $N$ such that $\bar{a}\equiv_{L'}\bar{b}$ but $\bar{a}\not\equiv_{L}\bar{b}$.
By saturation of $N$, there is an automorphism of $N$ that maps
$\bar{a}$ to $\bar{b}$, thus the automorphism group of $N$ is strictly
larger than $\Aut\left(M\right)=\AGL{\omega}$. By Theorem \ref{thm:main},
$\Aut\left(N\right)$ is the full permutation group of $\Qq^{<\omega}$,
thus $N$ is the trivial structure. 
\end{proof}

\section{\label{sec:PGL}$\protect\PGL n$ is maximal}

In this section we will show using the same techniques as in Section
\ref{sec:AGL} that $\PGL n$ is maximal for all $n\leq\omega$. Recall:
\begin{defn}
For a vector space $V$ over a field $F$, let $P\left(V\right)$
be the set of one-dimensional subspaces of $V$. Let $PGL\left(V\right)$
be the group $GL\left(V\right)/Z\left(GL\left(V\right)\right)$ (where
$Z\left(GL\left(V\right)\right)$ is just the group $\set{\alpha\id}{\alpha\in F^{\times}}$).
Elements of $P\left(V\right)$ are called \emph{points}, while two-dimensional
subspaces of $V$ are called \emph{lines}. A point $x$ is lies on
a line $L$ if $x\subseteq L$. For $n<\omega$, let $\PGL n=PGL\left(\Qq^{n}\right)$,
and let $\PGL{\omega}=PGL\left(\Qq^{<\omega}\right)$. 
\end{defn}
With this definition of lines, points and incidence, $P\left(V\right)$
satisfies all the axioms of a projective space \cite{MR1629468},
and $PGL\left(V\right)$ is a group of automorphism of the projective
space structure. In fact, by Fact \ref{fac:fund-projective} below,
this is the group of all automorphisms of the projective space in
case the field is $\Qq$ (or in general when $\Aut\left(F\right)$
is trivial). 

As in Section \ref{sec:AGL}, we assume:
\begin{assumption}
\label{asm:main projective assumption}Throughout this section, we
fix $3\leq n\leq\omega$. Let $V=\Qq^{n}$ as a vector space in case
$n<\omega$ and $\Qq^{<\omega}$ in case $n=\omega$. Let $\Omega=P\left(V\right)$.\end{assumption}
\begin{fact}
\label{fac:fund-projective}\cite[Corollary 3.5.9]{MR1629468} (The
fundamental theorem of projective geometry) For a permutation $\sigma$
of $\Omega$, $\sigma\in\PGL n$ iff $\sigma$ preserves lines iff
$\sigma$ takes triples of collinear points to collinear points. 
\end{fact}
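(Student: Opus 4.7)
The plan is to prove the three-way equivalence by the cycle (a) $\Rightarrow$ (b) $\Rightarrow$ (c) $\Rightarrow$ (a), where (a), (b), (c) denote the three conditions in the order given. The first two implications are essentially immediate: any $T \in GL(V)$ maps two-dimensional vector subspaces to two-dimensional subspaces, so the induced map on $\Omega$ sends lines to lines; and if $\sigma$ sends a line $L$ to a line, then any three points of $L$ go to three points of a single line and hence remain collinear. So the whole content of the statement is (c) $\Rightarrow$ (a).

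For (c) $\Rightarrow$ (a), I would first upgrade (c) to the stronger property that $\sigma$ maps each line \emph{onto} a line. Given distinct $x, y \in L$, applying (c) to triples $(x, y, z)$ with $z \in L$ shows $\sigma(L) \subseteq L'$, where $L'$ is the line through $\sigma(x)$ and $\sigma(y)$. To get $\sigma(L) = L'$, suppose some $w \in L' \setminus \sigma(L)$ existed and put $z_0 = \sigma^{-1}(w) \notin L$; applying (c) to the line $M$ through $z_0$ and a generic point of $L$ forces $\sigma(M) \subseteq L'$ as well, and combining the images of two such distinct lines with the bijectivity of $\sigma$ produces more points on $L'$ than it can accommodate relative to the plane spanned by $L$ and $z_0$. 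Once $\sigma$ is known to send lines onto lines, one deduces preservation of planes, since a projective plane can be described as the union of lines through a fixed point $p$ that meet a fixed line $\ell$ not containing $p$, and this description is purely incidence-theoretic.

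The heart of the argument is the reconstruction of a linear map inducing $\sigma$. Fix a projective frame, i.e., the points $[e_\alpha]$ for a basis $(e_\alpha)$ of $V$ together with the ``unit point'' spanned by the sum of the $e_\alpha$. By the transitivity of $\PGL n$ on projective frames, after composing with a suitable element of $\PGL n$ one may assume $\sigma$ fixes this frame pointwise. Using the preserved incidence structure and the classical von Staudt construction, one defines addition and multiplication on any projective line purely geometrically, and checks that the restriction of $\sigma$ to such a line then induces a field automorphism of $\Qq$. Since $\Aut(\Qq) = \{\id\}$ (any field automorphism of $\Qq$ fixes $\Zz$ pointwise, hence fixes $\Qq$), this automorphism is trivial, so $\sigma$ fixes every point of the frame and of the lines through frame points; extending by incidence, $\sigma$ is the identity, and the original $\sigma$ lies in $\PGL n$.

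The main obstacle is the coordinatization step: building the field operations from pure incidence requires Desargues' theorem, which is automatic in projective dimension at least $3$ (equivalently $n \geq 4$, or $n = \omega$), and in the remaining case $n = 3$ holds because the projective plane in question is coordinatized by the Desarguesian field $\Qq$. The passage from a semilinear map to a genuinely linear one is precisely the step where $\Aut(\Qq) = \{\id\}$ is essential; over fields with nontrivial automorphisms one only recovers membership in $P\Gamma L$, which is why the statement is specific to $\Qq$.
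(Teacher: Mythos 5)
A preliminary remark on the comparison: the paper does not prove this statement at all --- it is recorded as a Fact and attributed to \cite[Corollary 3.5.9]{MR1629468}, so there is no in-paper argument to measure yours against. Your outline is the standard textbook proof (reduce to a collineation, fix a projective frame, coordinatize by von Staudt, and use that $\Aut(\Qq)$ is trivial), and you correctly identify the easy implications, the role of Desargues for $n=3$ versus $n\geq4$, and the reason the statement is specific to $\Qq$ rather than giving only $P\Gamma L$.

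The one step that does not work as written is the upgrade from ``collinear triples go to collinear triples'' to ``$\sigma$ maps lines onto lines''. Your intermediate conclusion is fine: if $w\in L'\setminus\sigma(L)$ and $z_{0}=\sigma^{-1}(w)$, then every line through $z_{0}$ meeting $L$ is carried into $L'$, so the whole plane $\Pi=\langle L,z_{0}\rangle$ maps into $L'$. But the contradiction you then invoke --- that this ``produces more points on $L'$ than it can accommodate'' --- is a counting argument, and over $\Qq$ a line and a plane of $\Omega$ are both countably infinite, so an injection of $\Pi$ into $L'$ is not absurd on cardinality grounds. The correct finish, for finite $n$, counts dimensions of spans rather than points: since $\sigma$ sends collinear triples to collinear triples, $\sigma\left(\langle S\rangle\right)\subseteq\langle\sigma\left(S\right)\rangle$ for every set $S$ of points; extend three non-collinear points of $\Pi$ to an independent spanning set $S$ of $n$ points, and note that $\langle\sigma\left(S\right)\rangle$ then has vector dimension at most $2+(n-3)=n-1$, contradicting $\sigma(\Omega)=\Omega$. (For $n=3$ one can argue even more directly: $\Pi=\Omega$ would map into $L'$, contradicting surjectivity.) For $n=\omega$ this count degenerates, so the non-degeneracy step needs a genuinely different argument; likewise your appeal to transitivity of $\PGL{\omega}$ on projective frames is delicate, since the unit point $\left[\sum e_{\alpha}\right]$ does not exist in $\Qq^{<\omega}$ for an infinite basis. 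Since the paper's Assumption includes $n=\omega$, you should either supply these infinite-dimensional arguments or prove the finite case and cite the infinite-dimensional one separately.
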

It follows from Fact \ref{fac:fund-projective} that $\PGL n$ is
a closed subgroup of the group of permutations of $\Omega$ (which
is countable) --- it is the automorphism group of the structure $\left(\Omega,R\right)$
where $R\left(x,y,z\right)$ holds iff $x,y,z$ are collinear. 
\begin{rem}
\label{rem:Affine subspaces}Suppose that $U$ is a hyperplane in
$V$ (a subspace of co-dimension $1$). So $U$ can be seen as an
affine space with its structure of lines and points. 

Let $G=G_{U}=\GL n_{\left\{ U\right\} }$ --- the setwise stabilizer
of $U$. Let $PG=G/Z\left(G\right)$ (where $Z\left(G\right)=\set{\alpha\id}{\alpha\in\Qq^{\times}}$).
Let $X=X_{U}=\set{x\in\Omega}{x\subseteq U}$. Then $PG$ acts on
$\Omega\backslash X$. This action is equivalent to the action of
$\AGL{n-1}$ on $U$ (if $n=\omega$, $n-1=\omega$). To see this,
choose a basis $B$ of $U$ and $b\in V$ such that $B\cup\left\{ b\right\} $
is a basis of $V$. Present each point $x$ in $\Omega\backslash X$
uniquely as $\left\langle b+u_{x}\right\rangle $ --- the span of
$b+u_{x}$ --- where $u_{x}\in U$. Similarly, present each $\sigma\in PG$
uniquely as $T\cdot Z\left(G\right)$ where $T\in G$ with $T\left(b\right)=b+u_{\sigma}$
for $u_{\sigma}\in U$. Then $\sigma\left(x\right)=\left\langle T\left(b+u_{x}\right)\right\rangle =\left\langle Tb+Tu_{x}\right\rangle =\left\langle b+u_{\sigma}+Tu_{x}\right\rangle $.
Thus, by identifying $x$ with $u_{x}$ and $\sigma$ with the map
$u\mapsto u_{\sigma}+Tu$ we get the desired equivalence. 

Moreover, the identification of $\Omega\backslash X$ with the affine
space $U$ (via $x\mapsto u_{x}$) preserves lines: collinear points
in $\Omega$ map to collinear points in $U$. In fact, it preserves
affine/projective subspaces as well, meaning that if $W\subseteq V$
is a subspace then $P\left(W\right)\cap\left(\Omega\backslash X\right)$
is an affine subspace of $\Omega\backslash X$ (or $\emptyset$),
and for any affine subspace $A\subseteq\Omega\backslash X$, the projective
subspace $P\left(W\right)$ generated by $A$ in $\Omega$ intersects
$\Omega\backslash X$ in $A$. 

Easily, $\Omega$ is covered by affine spaces, and if $B$ is a basis
of $V$ then $\Omega=\bigcup_{b\in B}\left(\Omega\backslash X_{U_{b}}\right)$
where $U_{b}$ is the hyperplane spanned by $B\backslash\left\{ b\right\} $.
After choosing $U$, we call $\Omega\backslash X$ the corresponding\emph{
affine space} and $X$ its \emph{hyperplane at infinity}. 

If $\Omega\backslash X$ is an affine space, and $L$ is a line in
it, then (the prolongation of) $L$ intersects $X$ in exactly one
point, which we call \emph{the point at infinity} of $L$. It follows
that if $\sigma$ is an affine map of $\Omega\backslash X$ that preserves
$L\cap\left(\Omega\backslash X\right)$ then the unique extension
of $\sigma$ to $\PGL n$ preserves the point at infinity of $L$.

Note that since $V$ cannot be covered by finitely many hyperplanes,
it follows that for any finite set $s\subseteq V\backslash\left\{ 0\right\} $
there is some hyperplane $U$ such that $s\cap U=\emptyset$. This
means that given finitely many points from $\Omega$, there is some
hyperplane $U$ such that all these points are in $\Omega\backslash X_{U}$. \end{rem}
\begin{thm}
\label{thm:main proj}Under Assumption \ref{asm:main projective assumption},
$\PGL n$ is a maximal-closed permutation group of $\Omega$. \end{thm}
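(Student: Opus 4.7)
The plan is to mirror the four-step strategy of Section \ref{sec:AGL}: show that any $G$ strictly containing $\PGL n$ must be 3-transitive, note that it is a Jordan group, invoke Fact \ref{fac:Macpharson-and-Adeleke}, and then rule out each of the four non-highly-transitive alternatives.

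Three-transitivity is in fact \emph{easier} here than in the affine case. The group $\PGL n$ is 2-transitive on $\Omega$ by a standard basis-extension argument, and it has exactly two orbits on ordered triples of distinct points, namely the collinear and the non-collinear triples. Sharp 3-transitivity of $\PGL 2$ on each projective line together with transitivity of $\PGL n$ on lines handles the collinear orbit, while linear independence of representatives of three non-collinear points handles the non-collinear orbit. By Fact \ref{fac:fund-projective}, any $\sigma \in G \setminus \PGL n$ sends some collinear triple to a non-collinear one, merging these two orbits. The Jordan property is immediate: $\Omega \setminus \{p\}$ is already a Jordan set because $\PGL n$ is 2-transitive.

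The heart of the proof is ruling out the four structures of Fact \ref{fac:Macpharson-and-Adeleke}. The main tool is the affine reduction of Remark \ref{rem:Affine subspaces}: fixing a hyperplane $U \subseteq V$, writing $X = X_U$ and $A = \Omega \setminus X$, the setwise stabilizer $\PGL n_{\{X\}}$ acts on $A$ as $\AGL{n-1}$, with $n - 1 \geq 2$ by our standing assumption $n \geq 3$. For the cyclic separation relation and $D$-relation cases, the defining axioms are universal, so the restriction from $\Omega$ to $A$ of such a relation is itself a relation of the same type and is still preserved by $\AGL{n-1}$, directly contradicting Propositions \ref{prop:No S relation} and \ref{prop:noD}.

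The two Steiner-type cases do not admit this clean reduction, since restrictions of Steiner systems to subsets are not automatically Steiner systems. Here we plan to redo the arguments of the Steiner subsection and Subsection \ref{sub:A-limit-of} directly inside $\Omega$. The projective analog of Lemma \ref{lem:block are contained in lines} --- that the block determined by $k$ points lying in a proper projective subspace is itself contained in that subspace --- holds by the same Jordan-set argument, but with some care: in projective space, by the Grassmann formula, a projective line always meets a hyperplane, so one has to begin with subspaces of projective codimension at least $2$ and iterate. Once blocks are forced to lie inside projective lines, the $k = 2$ case dies by 3-transitivity exactly as in Lemma \ref{lem:If G prop. cont. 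AGL then k>2}, and the $k > 2$ case by projective analogs of Lemma \ref{lem:at least k+2} and Proposition \ref{prop:AGL no Steiner}, using the $\Qq^\times$-action on a projective line obtained from the $\PGL n$-pointwise stabilizer of two of its points. The limit-of-Steiner case then reduces to the Steiner case as in Corollary \ref{cor:noLimit}, with the projective analog of Lemma \ref{lem:permutations} --- the statement that the $\Pi_i$'s are closed under enough group elements --- being the main technical step; we expect to obtain it by using projective transformations that restrict to affine translations on a suitable chart. The main expected obstacle is precisely this closure step, where the clean affine translation arguments of Lemma \ref{lem:permutations} must be recast in the projective setting while carefully tracking how the transformations interact with the hyperplane at infinity.
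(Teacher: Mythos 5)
Your outline follows the paper's own strategy step for step: the two-orbit analysis of triples plus Fact \ref{fac:fund-projective} for 3-transitivity, the Jordan property, Fact \ref{fac:Macpharson-and-Adeleke}, and then the elimination of the four alternatives by transporting the affine arguments through the charts of Remark \ref{rem:Affine subspaces}. However, three of your steps would fail as stated.

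First, $\Omega\setminus\{p\}$ does not witness that $G$ is a Jordan group: the definition requires a Jordan set $A$ with $\left|\Omega\setminus A\right|\geq k$ for \emph{every} $k$ for which $G$ is $k$-transitive, and here the complement is a single point while $G$ is at least $2$-transitive. You need the complement of a line (or of any proper projective subspace), whose complement is infinite; this is what the paper uses. Second, the axioms for a $D$-relation are \emph{not} all universal: (d5) is existential, so the restriction of a $D$-relation on $\Omega$ to an affine chart need not be a $D$-relation, and the proof of Proposition \ref{prop:noD} invokes (d5) in its very first line to produce the auxiliary point $d$. The paper therefore first derives $D\left(a,b;c,d\right)$ with $a,b,c\in L$ and $d\notin L$ inside all of $\Omega$ (using (d5), (d3) and the Jordan set $\Omega\setminus L$), and only then chooses a hyperplane $U$ with $a,b,c,d\in\Omega\setminus X_{U}$ to run the affine computation; only the $S$-relation case reduces by pure universality. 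Third, your fix for the projective analogue of Lemma \ref{lem:block are contained in lines} --- restricting to subspaces of projective codimension at least $2$ and iterating --- does not cover the case $n=3$ with $A$ a projective line: there $A$ is a hyperplane of the projective plane, every line meets it, and yet this is precisely the case needed to force blocks through collinear points to lie on the line. The paper's remedy is to choose $L$ with $\left|A\cap L\right|\leq1$; the Jordan argument then forces the block to contain $\Omega\setminus\left(A\cap L\right)$, hence all of $\Omega\setminus\{x\}$ for a single point $x$, hence all of $\Omega$ by transitivity, a contradiction. With these three repairs, the remainder of your outline (the point-at-infinity or two-point-stabilizer argument for the analogue of Lemma \ref{lem:at least k+2}, and the chart-by-chart closure argument for the analogue of Lemma \ref{lem:permutations} in the limit case) coincides with the paper's proof.
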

\begin{proof}
The proof follows the same lines as the proof of Theorem \ref{thm:main}.
We will go over the steps, using the notation of Remark \ref{rem:Affine subspaces}
and the notation of the proof of Theorem \ref{thm:main}. 

Step 1: Analyze the orbits of $\PGL n$ on triples of distinct elements
from $\Omega$. Here, as opposed to the situation in Lemma \ref{lem:orbits},
there are only two orbits --- the set of collinear triples and the
set of non collinear triples. Hence Corollary \ref{cor:3-trans} follows
at once from Fact \ref{fac:fund-projective}: if $G$ is a group of
permutations of $\Omega$ properly containing $\PGL n$ then it is
3-transitive. 

Step 2: Observe that $\PGL n$ is a Jordan group. Indeed the complement
of a line or of any proper projective subspace is a Jordan set. Now
we may apply Fact \ref{fac:Macpharson-and-Adeleke}.

Step 3: We deal with the $S$ and $D$-relations exactly as we did
in Sections \ref{sub:A-cyclic-separation} and \ref{sub:A-D-relation},
by working within an affine space. In the $S$-relation case, we can
first fix any hyperplane $U$, and choose our line and points in $\Omega\backslash X_{U}$.
Since the action of $PG_{U}\leq\PGL n$ on $\Omega\backslash X_{U}$
is equivalent to the action of $\AGL n$ on $U$, we get that $PG_{U}$
cannot preserve an $S$-relation on $\Omega\backslash X_{U}$, so
the same follows for $\PGL n$ and $\Omega$ (because the axioms for
the $S$-relation are universal). In the $D$-relation case, we first
choose a line $L$ and we get that $D\left(a,b;c,d\right)$ holds
for $a,b,c\in L$ and $d\notin L$. Then we can choose a hyperplane
$U$ such that $a,b,c,d\in\Omega\backslash X_{U}$, and we work within
$\Omega\backslash X_{U}$ with $PG_{U}$ and reach a contradiction
as in Section \ref{sub:A-D-relation}. 

Step 4: We deal with the Steiner system case.

Lemma \ref{lem:block are contained in lines} remains true by replacing
affine subspace by projective subspace $A$ with similar proof: instead
of choosing a line $L$ disjoint of $A$, choose a line $L$ such
that $\left|A\cap L\right|\leq1$. But if $\bb$ is a block which
contains $\Omega\backslash\left\{ x\right\} $ for some $x$, then
by transitivity, $\bb=\Omega$. Lemma \ref{lem:If G prop. cont. AGL then k>2}
follows. 

For Lemma \ref{lem:at least k+2}, we have to reverse the even and
odd cases. Start by working within some affine space $\Omega\backslash X_{U}$
as in Step 3, which we identify with $\Qq^{n-1}$ (or $\Qq^{<\omega}$).
For even $k=2l\geq4$, choose $k-1$ points in $\Omega\backslash X_{U}$,
$-\left(l-1\right)e_{1},\ldots,0,\ldots,\left(l-1\right)e_{1}$ and
then add the point at infinity of the line containing these points.
Now there is one more point, $x$, which must be on the line $L$
and hence must be in $\Omega\backslash X_{U}$. The affine map $-\id$
then preserves $L$, and hence also the point at infinity, but takes
$x$ to $-x$, hence adds one more point. For odd $k=2l+1\geq3$,
choose $k-1$ points in $\Omega\backslash X_{U}$, $-le_{1},\ldots,-e_{1},e_{1},\ldots,le_{1}$
and the point at infinity of this line. Now there is one more point
$x$ which again must be in the line but also in $\Omega\backslash X_{U}$,
so either this point is $\neq0$, in which case we can proceed as
in the odd case, or this point is $0$, in which case we can translate
by $e_{1}$, fixing the point at infinity, and get one more point. 

Proposition \ref{prop:AGL no Steiner} now follows with the same proof.
First we choose any $k-1$ points on some line, and we add one more
point out of it. We know that there are two more points which belong
to the projective space (in fact a plane) generated by the $k$ chosen
points. Choose some hyperplane $U$ such that $\Omega\backslash X_{U}$
contains all these points. So now we work within the affine space
$\Omega\backslash X_{U}$, and produce infinitely many points in a
block, all contained in the same line. Note that in the proof there,
the choice of points on the line was not important. 

This shows that any group $G\gneq\PGL n$ cannot preserve a $k$-Steiner
system on $\Omega$ for $k\geq2$. 

Step 5: The limit of Steiner system case.

The proof as in Section \ref{sub:A-limit-of} works with some minor
modifications. So again we show that if $\PGL n$ preserves a limit
of Jordan sets on $\Omega$ and it is $m$-transitive but not $\left(m+1\right)$-transitive
then it must preserve an $ $$m$-Steiner system on $\Omega$. 

Lemmas \ref{lem:closure} and \ref{lem:only one large orbit} hold
with exactly the same proofs. Also Lemma \ref{lem:last not on line is large}
(which only used the fact the two lines intersect in at most one point). 

Lemma \ref{lem:permutations} requires some small modification. The
proof uses exactly the same technique, but now we first choose a basis
$B$ of $V$, and we show that for some $i\in J$ large enough, for
all $b\in B$, $\Pi_{i}$ contains all the points with integer coefficients
in the affine space which corresponds to $b$ (i.e., which corresponds
to the hyperplane spanned by $B\backslash\left\{ b\right\} $) under
the identification described in Remark \ref{rem:Affine subspaces}.
These points correspond to points which admit a tuple of homogeneous
coordinates consisting of integers, at least one of which is 1. Since
any point of $\Omega$ belongs to at least one of these affine spaces,
the same proof will work.

Suppose first that $n=\omega$, and let $\set{e_{i}}{i\in\mathbb{Z}}$
be a basis for $V$. Denote by $A_{i}$ the affine space corresponding
to $e_{i}$. Using the same technique of the proof of Lemma \ref{lem:permutations},
we find $i\in J$ large enough so that $\Pi_{i}$ is preserved under
the projective maps induced by the linear maps $\sigma_{1}$, which
maps $e_{i}\mapsto e_{i+1}$, its inverse, $\sigma_{2}$ which fixes
$e_{0}$, maps $e_{i}\mapsto e_{i+1}$ for $i\neq-1$, and maps $e_{-1}\mapsto e_{1}$,
and its inverse. We also want $\Pi_{i}$ to be preserved under the
affine map of translating by $e_{1}$ in the affine space that corresponds
to $e_{0}$, and its inverse. It follows that with the map $\sigma_{1}$
we cover all affine spaces, and with the map $\sigma_{2}$ we cover
all the coordinates within one affine space. Now if $i$ is large
enough so that $\Pi_{i}$ contains the $0$ point of the affine space
corresponding to $e_{0}$, it will follow that $\Pi_{i}$ is as required. 

For $n$ finite the proof is the same (but simpler). 

The rest of the proof is exactly the same, so the theorem follows. \end{proof}
\begin{rem}
\label{rem:general field}The same proof goes through to show that
$P\Gamma L_{n}\left(K\right)$ is maximal for any field $K$ of characteristic
$0$ in which the only roots of unity are $\left\{ 1,-1\right\} $
(though Lemma \ref{lem:permutations} needs a slightly different argument).
However, recall from the introduction that Bogomolov and Rovinsky
\cite{bogomolovRovinsky} proved that for $n\geq3$, $\PGaL nF$ is
a maximal-closed subgroup of the group of all permutations of $F$
for any infinite field $F$. 
\end{rem}

\section{Open questions}
\begin{question}
Is $\PGaL 2F$ maximal for an algebraically closed field $F$ of transcendence
degree $\geq1$?
\end{question}
Note that the action of $\PGaL 2F$ on $P\left(F^{2}\right)$ preserves
the $3$-Steiner system whose blocks are conjugates of $\acl\left(\Qq\right)\cup\left\{ \infty\right\} $.

\begin{question}
Is the automorphism group of the geometry of a homogeneous structure
constructed using the Hrushovski construction maximal-closed?
\end{question}
This question might be a bit too vague, so here is a precise special
case. Consider the Hrushovski construction giving a homogeneous 2-Steiner
system described in \cite[Chapter 15]{MR1632579} or \cite[Section 5]{MR1226304}.
Namely, let $R$ be a ternary relation symbol, and define a pre-dimension
$\delta$ on the class of 3-hypergraphs ($R$-structures in which
$R$ is symmetric and holds only for tuples of distinct points) by
$\delta\left(A\right)=\left|A\right|-\left|R^{A}\right|$. Consider
the family of 3-hypergraphs $A$ for which $\delta\left(A_{0}\right)\geq\min\left\{ \left|A_{0}\right|,2\right\} $
for any $A_{0}\subseteq A$. The usual Fra\"iss\'e-Hrushovski amalgamation
associated with this class and pre-dimension gives a countable structure
$M$ equipped with a dimension function $d$. Consider the permutations
of $M$ which preserve the dimension. Is this group maximal?

After this paper appeared we were told in private communication by
Omer Mermelstein that this group is not maximal-closed. According
to him, a similar construction to the one done in \cite[Section 6]{omer}
gives a counterexample. 
\begin{question}
Is there a $k$-Steiner system on a countable set whose automorphism
group $G$ is $k$-transitive and also preserves an $l$-steiner system
for $l\neq k$. One may also add the condition that the $k$-blocks
are Jordan complements for $G$. 

\bibliographystyle{alpha}
\bibliography{common}
\end{question}

\end{document}